\newcommand{\remove}[1]{ }
\newtheorem{theorem}{Theorem}[section]
\newtheorem{proposition}[theorem]{Proposition}
\newtheorem{lemma}[theorem]{Lemma}
\newtheorem{corollary}[theorem]{Corollary}
\theoremstyle{definition}
\newtheorem{definition}[theorem]{\rm Definition}
\theoremstyle{remark}
\newtheorem{remark}[theorem]{\rm Remark}
\newcommand{\C}{\mathbb{C}}
\newcommand{\N}{\mathbb{N}}
\renewcommand{\Im}{\operatorname{Im}}
\begin{document}

\title[Series representation of mean-periodic functions]
{Representation of mean-periodic functions in series of exponential polynomials}

\author[ H. Ouerdiane, M. Ouna\"{\i}es]
{ Habib Ouerdiane and Myriam Ounaies}
\address{D\'epartement de Math\'ematique, Facult\'e des Sciences de Tunis, Universit\'e
de Tunis El Manar, Campus universitaire, 1060 Tunis, Tunisie.}
\email{habib.ouerdiane@fst.rnu.tn}

\address{Institut de Recherche Math\'ematique Avanc\'ee, Universit\'e
Louis Pasteur 7 Rue Ren\'e Des\-car\-tes, 67084 Strasbourg CEDEX,
France.} \email{ounaies@math.u-strasbg.fr}

\date{\today}

\keywords{Convolution equations, Fourier-Borel transform, mean
periodic functions, interpolating varieties}

\subjclass{30D15, 41A05, 46E10, 44A35}

\maketitle
\begin{abstract}
Let $\theta$ be a Young function and consider the space
$\mathcal{F}_{\theta}(\C)$ of all entire functions with
$\theta$-exponential growth. In this paper, we are interested in
the solutions $f\in \mathcal{F}_{\theta}(\C)$ of the convolution
equation $T\star f=0$, called mean-periodic functions, where $T$
is in the topological dual of $\mathcal{F}_{\theta}(\C)$. We show
that each mean-periodic function can be represented in an explicit way as a 
convergent series of exponential polynomials.
\end{abstract}
\begin{section}{Introduction}

A periodic function $f$ with period $t$ may be defined in terms of
convolution equation as a function verifying
 $$(\delta_t-\delta_0) \star f=0,$$ while a function with
zero average over an interval of length $t>0$ satisfies the
convolution equation $$\mu\star f=0,$$
 where $\mu$ is defined by $\displaystyle <\mu,f>=\frac{1}{t} \int_{-t/2}^{t/2} f(x)dx$. From the observation that the second
  notion is more natural from the point of view of experimental physics, Delsartes generalized the concept of periodic functions by introducing in \cite{De} the notion of "mean-periodic" functions as the solutions of homogeneous convolution equations. 
  
In this paper, we are dealing with the problem of representing mean
periodic functions as series of exponentials polynomials.

Let us denote by $\mathcal{H}(\C)$ the space of all entire
functions on $\C$. Let $\theta$ be a Young function and $\theta^*$
its Legendre transform
  (see Definitions \ref{young} and \ref{legendre} below). The
mean-periodic functions will lie in the space
$\mathcal{F}_\theta(\C)$ of all functions $f\in \mathcal{H}(\C)$
 such that
 \begin{equation}\label{growth}
\sup_{z\in \C} \vert f(z)\vert e^{-\theta^*(m\vert z\vert)}
<\infty, \end{equation}
 for all constants $m>0$.

We will also consider the limit case where $\theta(x)=x$. In this
case, the associated conjugate function $\theta^*$ is formally
infinite. Therefore, no growth condition of the type
\eqref{growth} is involved and we put
$\mathcal{F}_\theta(\C)=\mathcal{H}(\C)$.

 We will
say that $f\in \mathcal{F}_\theta(\C)$ is a mean-periodic function
if, for a certain non zero analytic functional $T\in {\mathcal
F}_{\theta}^{\prime}(\C)$, $f$ verifies the convolution equation
\begin{equation}\label{equ}
T\star f=0.
\end{equation}
We will then say that $f$ is $T$-mean-periodic.

For example, if we denote by $\{\alpha_k\}_k$ the zeros of the
Fourier-Borel transform of $T$ and $m_k$ their order of
multiplicity, then all exponential monomials $z^j e^{\alpha_k z}$,
with $j<m_k$ are $T$-mean-periodic functions (see Lemma
\ref{monomials}). Then, every convergent series whose general term
is a linear combination of such exponential monomials is also a
$T$-mean-periodic function.

 Our main result (see Theorem \ref{gen})
states roughly that the converse holds, provided that we apply an 
Abel summation procedure in order to make the sum convergent. In fact,
we prove that any $T$-mean-periodic function $f\in
\mathcal{F}_{\theta}(\C)$
 admits the following expansion as
a convergent series in  $\mathcal{F}_\theta(\C)$
\begin{equation}\label{intro1}
f(z)=\sum_k \sum_{l=0}^{m_k-1} c_{k,l}\left[\sum_{j=0}^k
e^{z\alpha_j}P_{k,j,l}(z)\right],
\end{equation}
 where $P_{k,j,l}$ are polynomials of degree $<m_j$, explicitely given by \eqref{poly1} and \eqref{poly2}
 in terms of $\{(\alpha_k,m_k)\}_k$. Moreover, the coefficients $c_{k,l}$ verify the growth
  condition \eqref{cnl} and can be explicitly computed in terms of $f$ and
$T$.

 When $V=\{(\alpha_k,m_k)\}_k$ is an interpolating variety
 (see Definition \ref{intvar}), no Abel summation process is needed, we simply obtain that any
 $T$-mean-periodic function $f\in \mathcal{F}_{\theta}(\C)$
admits the following expansion as a convergent series in
$\mathcal{F}_\theta(\C)$
\begin{equation}\label{intro2}
f(z)=\sum_k  e^{z\alpha_k}\sum_{l=0}^{m_k-1}d_{k,l}
\frac{z^j}{j!} ,
\end{equation}
where the coefficients $d_{k,l}$ verify the growth estimate
\eqref{dkj} (see Theorem \ref{int}).

Our present work is inspired by the paper \cite{Be-Ta},  written by C.A. Berenstein and B.A. Taylor in 1975
where the authors considered the case
$\mathcal{F}_{\theta}(\C)=\mathcal{H}(\C)$. In fact, they showed
that, given $T\in {\mathcal H}^{\prime}(\C)$, there exists a
sequence of indices $k_0=0<k_1<\cdots<k_n<\cdots$  such that any $T$-mean
periodic function $f\in \mathcal{H}(\C)$ admits a unique
expansion, convergent in $ \mathcal{H}(\C)$, of the form
\begin{equation}\label{intro3}f(z)=\sum_n\sum_{k_n\le k<k_{n+1}}
e^{z\alpha_k }\sum_{j=0}^{m_k-1}d_{k,j} \frac{z^j}{j!}.
\end{equation}

 In \eqref{intro3},
the sum converges by packets grouping instead of Abel summation. But in general, the sequence $\{k_n\}_n$ is not
explicit, except in the case when $V$ is an interpolating variety,
where the sequence $k_n=n$ works, thus formula \eqref{intro3}
leads to \eqref{intro2}.

In 1988,  representation formulas of the form \eqref{intro3}  were given by C.A. Berenstein and D.C. Struppa  (cf \cite{Be-St}) in the case  where $\theta(x)=x^p$, $p>1$ and in the more complicated situation where $\mathcal{F}_\theta(\C)$ is replaced by $\mathcal{F}_\theta(\Gamma)$ with $\Gamma$ an open convex cone in $\C$, provided some natural conditions on the behavior of the Fourier-Borel transform of $T$. 

We also refer the interested reader to \cite{Be-St1} for a general survey on the connections between mean periodicity and complex anlaysis in $\C^n$. 

To conclude the introduction, here is how the paper is organized : Section \ref{prel} is devoted to
preliminary definitions and useful results from functional
analysis. The main results are stated in section \ref{main} and their proofs
are given  in section \ref{proof}. Finally, in section \ref{intcase}, we study
the particular case when $V$ is an interpolating variety.

 \end{section}

\begin{section}{Preliminaries and definitions.}\label{prel}

\begin{definition}\label{young}
A function $\theta : [0,+\infty[\rightarrow [0,+\infty[$ is called
a Young function if it is convex, continuous, increasing and
verifies $\theta(0)=0$ and $r=o(\theta(r))$ when $r\rightarrow
+\infty$.
\end{definition}

\begin{definition}\label{legendre}
Let $\theta$ be a Young function. The Legendre transform
$\theta^*$ of $\theta$ is the function defined by
$${\theta^*}(x)=\sup_{t\ge 0} (tx-\theta(t)).$$
\end{definition}

Note that the Legendre transform of a Young function is itself a Young
function and $\theta^{**}=\theta$. We refer the reader to
\cite{Kr} for further details.
Throughout the paper, $\theta$ will denote either the function
$\theta(x)=x$ or a Young function.

For any $m>0$, consider $E_{\theta,m}(\C)$, the Banach space of
all functions $f\in \mathcal{H}(\C)$ such that $$\Vert
f\Vert_{\theta,m}:=\sup_{z\in \C} \vert f(z)\vert
e^{-\theta(m\vert z\vert)}<+\infty$$ and define
$$\mathcal{G}_{\theta}(\C)=\cup_{p\in \N^*}E_{\theta,p}(\C)$$
endowed with the inductive limit topology. It is clear that
$\mathcal{G}_{\theta}(\C)$ is an algebra under the ordinary
multiplication of functions.

\begin{remark}
When $\theta(x)=x^k$, $k\ge 1$, $\mathcal{G}_{\theta}(\C)$ is the
space of all entire functions, either of order $< k$ or of order
$k$ and finite type. In particular, when $k=1$,
$\mathcal{G}_{\theta}(\C)$ is the space of all entire functions of
exponential type, usually denoted by  Exp$(\C)$.
\end{remark}
We define the space $\mathcal{F}_{\theta}(\C)$ as follows :

(i) In the case where $\theta(x)=x$, we put
$\mathcal{F}_{\theta}(\C)=\mathcal{H}(\C)$, the space of all
entire functions endowed with the topology of uniform convergence
on every compact of $\C$. It is a Fr\'echet-Schwartz space (see
\cite{Be-Ga}).

(ii) In the case where $\theta$ is a Young function, we denote
$$\mathcal{F}_{\theta}(\C)=\cap_{p\in\N^*}E_{\theta^*,1/p}(\C)$$endowed
with the projective limit topology. The space
$\mathcal{F}_\theta(\C)$ is a nuclear Fr\'echet space (see
\cite{GHOR}), hence it is a Fr\'echet-Schwartz space  (see
\cite{Sc}).

For any fixed $\xi\in \C$, and $l\in \N$, we will denote by
$M_{l,\xi}$ the exponential monomial $z \rightarrow z^le^{\xi z}$.
It is easy to see that $M_{l,\xi}\in \mathcal{F}_{\theta}(\C)$. In
the next we denote by ${\mathcal F}_{\theta}^{\prime}(\C)$ the
strong topological dual of $\mathcal{F}_{\theta}(\C)$.

Let us recall some definitions and properties from functional
analysis. We refer  to \cite{Be-Ga} for further details in the
case (i) and to \cite{GHOR} for the case (ii).

To any fixed $u\in \C$, define the translation operator $\tau_u$
on $\mathcal{F}_\theta(\C)$ by $$(\tau_u f)(z)=f(z+u), \hbox{for
all } f\in \mathcal{F}_{\theta}(\C) \ \hbox{and } z\in \C.$$ It's
easy to see that  $\mathcal{F}_\theta(\C)$ is invariant under
these translation operators.

For all $S\in  {\mathcal F}_{\theta}^{\prime}(\C)$ and $f\in
\mathcal{F}_\theta(\C)$, the function $z\rightarrow <S,\tau_z f>$,
where $<\ ,\ >$ denotes the duality bracket, is an element of
$\mathcal{F}_\theta(\C)$. Therefore, for any $S\in  {\mathcal
F}_{\theta}^{\prime}(\C)$, the map $S\star :
\mathcal{F}_\theta(\C)\rightarrow \mathcal{F}_\theta(\C)$  defined
by $$S\star f(z)=<S,\tau_z f>$$
 is a convolution operator, i.e., it is linear, continuous and commute with any translation operator.

For any $S\in {\mathcal F}_{\theta}^{\prime}(\C)$, the
Fourier-Borel transform of $S$, denoted by $\mathcal{L}(S)$ is
defined by $$\mathcal{L}(S)(\xi)=<S,e^{\xi \cdot}>,$$ where
$e^{\xi \cdot}=M_{0,\xi}$ is the function $z\in \C \rightarrow
e^{\xi z}$.

For any two elements $S$ and $U$ of ${\mathcal
F}_{\theta}^{\prime}(\C)$, the convolution product $S\star U\in
{\mathcal F}_{\theta}^{\prime}(\C)$ is defined by $$\forall  f\in
\mathcal{F}_\theta(\C), \ \ <S\star U,f>=<S,U\star f>.$$ Moreover
for any $S,U \in {\mathcal F}_{\theta}^{\prime}(\C) $
$$\mathcal{L}(S\star U)=\mathcal{L}(S)\mathcal{L}(U) $$

Under this convolution, ${\mathcal F}_{\theta}^{\prime}(\C)$ is a
commutative algebra admitting $\delta_0$, the  Dirac measure at
the origin, as unit.

\begin{proposition}\label{laplace}

The Fourier-Borel transform $\mathcal{L}$ is a topological
isomorphism between the algebras ${\mathcal
F}_{\theta}^{\prime}(\C) $ and $\mathcal{G}_{\theta}(\C)$.

\end{proposition}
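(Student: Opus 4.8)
The plan is to establish Proposition~\ref{laplace} in three stages: first identify the image of $\mathcal{L}$ as a set, then check it is a topological isomorphism onto $\mathcal{G}_\theta(\C)$, and finally verify the multiplicative property. For the set-level statement, I would argue separately in the two cases. In case (i), where $\mathcal{F}_\theta(\C)=\mathcal{H}(\C)$, this is the classical Polya--Martineau theorem: an analytic functional on $\mathcal{H}(\C)$ is carried by a compact set, so $\mathcal{L}(S)(\xi)=\langle S,e^{\xi\cdot}\rangle$ is entire of exponential type, i.e.\ lies in $\mathrm{Exp}(\C)=\mathcal{G}_\theta(\C)$; conversely a Borel-transform/duality argument recovers a functional from any function of exponential type. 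In case (ii), the definition $\mathcal{F}_\theta(\C)=\cap_p E_{\theta^*,1/p}(\C)$ means a continuous linear functional $S$ satisfies $|\langle S,f\rangle|\le C\,\Vert f\Vert_{\theta^*,1/p}$ for some $p$ and $C$; applying this to $f=e^{\xi\cdot}$ and using $|e^{\xi z}|e^{-\theta^*(|z|/p)}\le \sup_{z}\,e^{\Re(\xi z)-\theta^*(|z|/p)}$, the Legendre duality $\sup_{r\ge0}(r|\xi|-\theta^*(r/p))=p\,\theta^{**}(p|\xi|)\cdot(\text{up to scaling})=p\,\theta(p|\xi|)$ gives $|\mathcal{L}(S)(\xi)|\le C e^{\,p\,\theta(p|\xi|)}$, so $\mathcal{L}(S)\in E_{\theta,p^2}(\C)\subset\mathcal{G}_\theta(\C)$. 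Surjectivity and the norm estimate in the other direction come from the duality between $\mathcal{F}_\theta(\C)$ and $\mathcal{G}_\theta(\C)$ realized through the reproducing pairing, which is exactly the content cited from \cite{GHOR}.

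Next I would verify injectivity and continuity. Injectivity of $\mathcal{L}$ follows because the exponentials $\{e^{\xi\cdot}:\xi\in\C\}$ span a dense subspace of $\mathcal{F}_\theta(\C)$ (a standard consequence of the Hahn--Banach theorem together with the fact, just proved, that a functional annihilating all $e^{\xi\cdot}$ has zero Fourier--Borel transform, hence is zero; density then follows by reflexivity of these Fr\'echet--Schwartz spaces). For continuity of $\mathcal{L}$ and of its inverse, I would track the constants in the estimates above: the bound $|\mathcal{L}(S)(\xi)|\le C e^{p\theta(p|\xi|)}$ shows $\mathcal{L}$ maps the polar of the unit ball of $E_{\theta^*,1/p}(\C)$ into a bounded subset of $E_{\theta,p^2}(\C)$, giving continuity into the inductive limit $\mathcal{G}_\theta(\C)$; conversely the reconstruction formula $\langle S,f\rangle$ expressed via $\mathcal{L}(S)$ yields a bound on $S$ in terms of a seminorm of $\mathcal{L}(S)$, giving continuity of $\mathcal{L}^{-1}$. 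Since both spaces are (DF)- or Fr\'echet-Schwartz spaces with the relevant completeness, a continuous algebraic bijection with continuous inverse is a topological isomorphism; alternatively one may invoke the open mapping theorem in this category.

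Finally, the algebra isomorphism: the identity $\mathcal{L}(S\star U)=\mathcal{L}(S)\mathcal{L}(U)$ is already asserted in the text just before the proposition, so it suffices to note that $\mathcal{L}(\delta_0)(\xi)=\langle\delta_0,e^{\xi\cdot}\rangle=1$, the unit of $\mathcal{G}_\theta(\C)$, so $\mathcal{L}$ is a unital algebra homomorphism, and being a topological linear isomorphism it is an isomorphism of topological algebras. The main obstacle is the precise growth bookkeeping in case (ii)---matching the Legendre transform exponents so that $\mathcal{L}$ lands in the correct Banach step $E_{\theta,p}(\C)$ and, more delicately, proving \emph{surjectivity}, i.e.\ that every $g\in\mathcal{G}_\theta(\C)$ actually arises as $\mathcal{L}(S)$ for a functional $S$ continuous on the projective limit; this requires constructing $S$ (typically as an integral against $g$ over a suitable contour, or via the Hahn--Banach extension of the pairing defined on exponential sums) and checking the continuity estimate, which is where the nuclearity/Schwartz property and the duality results from \cite{GHOR} (resp.\ \cite{Be-Ga} in case (i)) do the real work. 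I would lean on those references for the hardest part rather than reprove the full duality from scratch.
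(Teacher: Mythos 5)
The paper does not prove Proposition~\ref{laplace}; it is stated as a recalled result, with \cite{Be-Ga} cited for case (i) and \cite{GHOR} for case (ii). So there is no in-paper argument to compare against. Your sketch is therefore being judged on its own merits, and it is broadly on target: you correctly split into the two cases, invoke Polya--Martineau for $\mathcal{H}(\C)$, compute the growth of $\mathcal{L}(S)$ via Legendre duality in the Young-function case, and (most importantly) you identify surjectivity together with the continuity of $\mathcal{L}^{-1}$ as the genuinely hard steps that require the full duality machinery of \cite{GHOR} (resp.\ \cite{Be-Ga}). Deferring there is exactly what the paper itself does.

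Two small corrections. First, the Legendre computation has a spurious factor. Starting from
$\sup_{r\ge 0}\bigl(r|\xi|-\theta^*(r/p)\bigr)$ and substituting $t=r/p$ gives
$\sup_{t\ge 0}\bigl(t\cdot p|\xi|-\theta^*(t)\bigr)=\theta^{**}(p|\xi|)=\theta(p|\xi|)$,
not $p\,\theta(p|\xi|)$. So $\mathcal{L}(S)\in E_{\theta,p}(\C)$ directly; the ``$E_{\theta,p^2}$'' and the ``up to scaling'' hedge are not needed. This does not change the conclusion since anything landing in some $E_{\theta,m}$ lies in the inductive limit, but the clean statement is worth having.

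Second, your injectivity argument as written is circular. You claim density of the exponentials $\{e^{\xi\cdot}\}$ follows ``by Hahn--Banach together with the fact, just proved, that a functional annihilating all $e^{\xi\cdot}$ has zero Fourier--Borel transform, hence is zero'' --- but ``hence is zero'' is precisely the injectivity you are trying to establish. The logic must go the other way: one first proves, independently, that the exponentials span a dense subspace of $\mathcal{F}_\theta(\C)$ (for instance by showing that polynomials are dense and that each monomial $z^n$ is a limit in $\mathcal{F}_\theta(\C)$ of difference quotients of exponentials, as in the proof of Lemma~\ref{monomials}); injectivity of $\mathcal{L}$ is then an immediate consequence via Hahn--Banach. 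With that reordering, and the corrected exponent, your outline matches the standard route in the cited references.
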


\end{section}

\begin{section}{Main results}\label{main}

Throughout the rest of the paper, let $T$ be a fixed non-zero
element of ${\mathcal F}^{\prime}_{\theta}(\C)$. Our main goal in
this section is to show that any function $f\in
\mathcal{F}_{\theta}(\C)$ satisfying the equation
\begin{equation}\label{equ}
T \star f=0
\end{equation}
can be represented as convergent series of exponential-polynomials
which are them-selves solution of  (\ref{equ}).

\begin{definition}
We say that a function $f\in \mathcal{F}_{\theta}(\C)$ is
"$T$-mean-periodic" if it satisfies the equation \eqref{equ}.
\end{definition}
Denote by $\Phi$ the entire function in $\mathcal{G}_{\theta}(\C)$
defined by $\Phi=\mathcal{L}(T)$. Before going further, let us
show the following division property :
\begin{lemma}\label{div}
Let $h\in \mathcal{H}(\C)$ and $g\in \mathcal{G}_{\theta}(\C)$. If
$g$ is not identically zero and if $f=gh \in
\mathcal{G}_{\theta}(\C)$, then $h\in  \mathcal{G}_{\theta}(\C)$.
\end{lemma}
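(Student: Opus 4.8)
The plan is to estimate $|h|$ on a suitable family of circles by using the growth of $f=gh$ together with a lower bound for $|g|$ off an exceptional set of small radii, via the classical minimum-modulus principle. Since $g\in\mathcal G_\theta(\C)$, fix $p\in\N^*$ with $\|g\|_{\theta,p}<\infty$; enlarging $p$ if necessary we may also assume $\|f\|_{\theta,p}<\infty$. We want to produce $q\in\N^*$ with $\|h\|_{\theta,q}<\infty$, i.e.\ $|h(z)|\le C e^{\theta(q|z|)}$.

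First I would apply the Cartan/Boutroux minimum-modulus estimate to $g$: for each $R>0$ there is a set of radii $r\in[R,2R]$ of total measure at least, say, $R/2$, on which $\min_{|z|=r}|g(z)|\ge e^{-A\log M_g(2R)}$ for an absolute constant $A$, where $M_g(2R)=\sup_{|z|=2R}|g(z)|$; one must also discard a neighborhood of the (at most finitely many in any disk) zeros of $g$, which is harmless since $g\not\equiv0$. Combining this with $M_g(2R)\le \|g\|_{\theta,p}\,e^{\theta(2pR)}$ gives, on those good radii, a lower bound $|g(z)|\ge c\,e^{-A'\theta(2pR)}$ for $|z|=r$. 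On the same circle, $|h(z)|=|f(z)|/|g(z)|\le \|f\|_{\theta,p}\,e^{\theta(pr)}\cdot c^{-1}e^{A'\theta(2pR)}\le C e^{B\theta(2pR)}$ for a constant $B$ depending only on $A'$ (using that $\theta$ is increasing and subadditive up to a constant). Since $h$ is entire, the maximum principle propagates this bound from the good circle in $[R,2R]$ to the whole disk $\{|z|\le R\}$. Finally, choosing $q$ so that $B\theta(2pR)\le \theta(qR)+\text{const}$ for all $R$ — which is possible because $\theta$ is convex with $\theta(0)=0$, hence $\theta(\lambda x)\le\lambda\theta(x)$ for $\lambda\ge1$ large enough to absorb the constant $2pB$ — yields $|h(z)|\le C'e^{\theta(q|z|)}$, so $h\in E_{\theta,q}(\C)\subset\mathcal G_\theta(\C)$.

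For the limit case $\theta(x)=x$, the statement reads: if $g,f\in\mathrm{Exp}(\C)$ and $f=gh$ with $g\not\equiv0$, then $h\in\mathrm{Exp}(\C)$; the identical argument applies since $\theta(x)=x$ is subadditive and $\theta(\lambda x)=\lambda\theta(x)$.

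The main obstacle is the quantitative minimum-modulus step: one needs the lower bound for $|g|$ to hold on a set of radii large enough that the maximum principle can be invoked, and with the exceptional small disks around zeros of $g$ genuinely excluded, so that the estimate transfers cleanly to a bound of the form $e^{B\theta(2pR)}$ uniformly in $R$. Handling the constants so that the final exponent is again of the form $\theta(q|z|)$ — rather than something like $\theta(2q|z|)$ plus a growing correction — relies essentially on the convexity of $\theta$ and must be done with some care, but it is the kind of routine bookkeeping I would not grind through here.
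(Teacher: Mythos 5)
Your proposal follows essentially the same route as the paper: apply a minimum-modulus estimate to $g$ on a suitable family of circles, combine it with the upper bound on $f=gh$ to control $|h|$ on a good circle, and then propagate the bound inward by the maximum principle; the paper simply works with dyadic radii $2^n$ and the specific minimum-modulus corollary from Berenstein--Gay rather than a generic Cartan--Boutroux statement, but the substance is identical. One small slip to note: convexity of $\theta$ together with $\theta(0)=0$ yields $\lambda\,\theta(x)\le\theta(\lambda x)$ for $\lambda\ge 1$ (the reverse of the inequality you wrote), and it is this correct direction that both the paper uses (in the form $\theta(p2^n)\le\tfrac12\theta(p2^{n+1})$) and that you actually need, since $B\,\theta(2pR)\le\theta(2pBR)$ gives the desired $\theta(q|z|)$ bound with $q=2pB$.
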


\begin{proof}
Up to a translation, we may assume that $g(0)\not=0$. Let us apply
the minimum modulus theorem and it's corollary given in
\cite[Lemma 2.2.11]{Be-Ga} to the function $g$ in the disc of
center $0$ and radius $2^{n+1}e$, where $n$ is any positive
integer.

As $g\in \mathcal{G}_{\theta}(\C)$, there exists $p\in \N^*$ and
$C_p>0$ (not depending on $n$) such that $$\max_{\vert \xi \vert
\le 2^{n+3}e} \vert g(\xi )\vert  \le C_p e^{{\theta}(p 2^n)}.$$
Thus, there exists $\varepsilon_p>0$ (not depending on $n$) and
$R_n$, $2^n\le R_n\le 2^{n+1}$ such that $$\min_{\vert \xi
\vert=R_n} \vert g(\xi )\vert \ge \varepsilon_p e^{-{\theta}(p
2^n)}.$$ Let $n\in \N$ and $\vert \xi\vert =R_n$. As $f\in
\mathcal{G}_{\theta}(\C)$, there exists  $q>0$ and $C_{q}>0$ (not
depending on $n$), such that $$\vert f(\xi)\vert \le
C_{q}e^{{\theta}(q 2^n)}.$$ Using the convexity of $\theta$ and
the fact that $\theta(0)=0$, we have $$\theta( p
2^n)\le \frac{1}{2}\theta(p
2^{n+1}).$$ If
we assume, for example, that $p\ge q$, we deduce that $$\vert
h(\xi)\vert = \vert f(\xi)\vert \frac{1}{\vert g(\xi)\vert} \le
\frac{C_q}{\varepsilon_{p}} e^{\theta(q 2^n)+\theta(p 2^n)}\le B_p
e^{\theta(p 2^{n+1})}.$$

Now let $z\in \C$, be such that $2^{n-1}\le \vert z\vert \le
2^n\le R_n$. By the maximum modulus theorem, $$\vert h(z)\vert \le
B_p e^{{\theta}(p 2^{n+1})}\le B_p e^{{\theta}(4p \vert
z\vert)}.$$ This proves that $h \in \mathcal{G}_{\theta}(\C)$.
\end{proof}

\begin{corollary}
In the case where $\Phi$ has no zeros, the only mean-periodic
function $f\in \mathcal{F}_{\theta}(\C)$ is the zero function.
\end{corollary}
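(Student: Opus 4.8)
The plan is to deduce the corollary directly from the division property in Lemma \ref{div}. The key observation is that $T$-mean-periodicity of $f$ translates, via the Fourier-Borel transform, into a divisibility statement: if $T\star f = 0$, then applying $\mathcal L$ and using the multiplicativity $\mathcal L(T\star f)=\mathcal L(T)\mathcal L(f)=\Phi\,\mathcal L(f)$ would be the natural route, except that $f\in\mathcal F_\theta(\C)$ is not an element of ${\mathcal F}_\theta^{\prime}(\C)$, so one must instead work with the operator $T\star\cdot$ acting on $\mathcal F_\theta(\C)$ and pair against exponentials. Concretely, I would first record the standard identity that for any $S\in{\mathcal F}_\theta^{\prime}(\C)$ and any $\xi\in\C$, the function $z\mapsto(S\star M_{l,\xi})(z)$ is computed by differentiating $\mathcal L(S)$: for instance $S\star e^{\xi\cdot}=\mathcal L(S)(\xi)\,e^{\xi\cdot}$, and more generally $S\star M_{l,\xi}$ is a linear combination of $M_{0,\xi},\dots,M_{l,\xi}$ with coefficients the derivatives $\mathcal L(S)^{(j)}(\xi)$. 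This is the content behind Lemma \ref{monomials} referenced in the introduction.

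Next I would argue that if $f$ is $T$-mean-periodic then $f\equiv 0$ whenever $\Phi=\mathcal L(T)$ has no zeros. The cleanest way: pass to the associated "spectral synthesis" picture. Since $T\star f=0$ and $T\star$ commutes with translations, for every $u\in\C$ we also have $T\star\tau_u f=0$. Consider the analytic functional $U_z\in{\mathcal F}_\theta^{\prime}(\C)$ defined by $\langle U_z,g\rangle = (T\star g)(z) = \langle T,\tau_z g\rangle$; its Fourier-Borel transform at $\xi$ is $\mathcal L(U_z)(\xi)=\langle T,\tau_z e^{\xi\cdot}\rangle = e^{\xi z}\Phi(\xi)$. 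The hypothesis $T\star f=0$ says $f$ lies in the kernel of all the operators $g\mapsto(T\star g)(z)$. To extract $f$ itself, I would use that $\delta_0 = \mathcal L^{-1}(1)$ and that $1/\Phi$ belongs to $\mathcal G_\theta(\C)$ — this is exactly where the hypothesis "$\Phi$ has no zeros" combined with Lemma \ref{div} enters: write $1 = \Phi\cdot(1/\Phi)$; since $1\in\mathcal G_\theta(\C)$ and $\Phi\in\mathcal G_\theta(\C)$ is not identically zero, Lemma \ref{div} gives $1/\Phi\in\mathcal G_\theta(\C)$. Hence $S:=\mathcal L^{-1}(1/\Phi)\in{\mathcal F}_\theta^{\prime}(\C)$ satisfies $S\star T = \mathcal L^{-1}(1)=\delta_0$, i.e. $T$ is invertible in the convolution algebra ${\mathcal F}_\theta^{\prime}(\C)$.

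With $T$ invertible, the conclusion is immediate: from $T\star f=0$ we get $f = \delta_0\star f = (S\star T)\star f = S\star(T\star f) = S\star 0 = 0$, where the associativity of the convolution action of ${\mathcal F}_\theta^{\prime}(\C)$ on $\mathcal F_\theta(\C)$ is the only extra fact needed, and that is part of the standard functional-analytic setup recalled in Section \ref{prel} (the map $S\star$ is a convolution operator and $\mathcal L(S\star U)=\mathcal L(S)\mathcal L(U)$). So the whole argument is: (1) $\Phi$ nonvanishing and $\Phi\in\mathcal G_\theta(\C)$ with Lemma \ref{div} $\Rightarrow$ $1/\Phi\in\mathcal G_\theta(\C)$; (2) Proposition \ref{laplace} $\Rightarrow$ $T$ has a convolution inverse $S$; (3) associativity $\Rightarrow$ $f=S\star(T\star f)=0$.

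The only genuinely delicate point is step (1), ensuring $1/\Phi\in\mathcal G_\theta(\C)$, and this is precisely why Lemma \ref{div} was proved first: a priori $1/\Phi$ is merely entire (it is entire because $\Phi$ never vanishes), and the growth control $|1/\Phi(\xi)|\le C e^{\theta(p|\xi|)}$ is not obvious from below-bounds on $\Phi$ alone — but Lemma \ref{div} applied with $h=1/\Phi$, $g=\Phi$, $f=1$ delivers it at no cost. Everything else is formal manipulation in the convolution algebra, so I expect the proof to be just a few lines once Lemma \ref{div} and Proposition \ref{laplace} are invoked.
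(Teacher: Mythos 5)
Your argument is exactly the paper's: use Lemma \ref{div} with $g=\Phi$, $h=1/\Phi$, $f=1$ to get $1/\Phi\in\mathcal G_\theta(\C)$, invoke Proposition \ref{laplace} to obtain $S=\mathcal L^{-1}(1/\Phi)\in\mathcal F_\theta'(\C)$ with $S\star T=\delta_0$, and conclude $f=\delta_0\star f=S\star(T\star f)=0$. The preliminary detour through $U_z$ and pairing against exponentials is unnecessary and can be dropped, but the core three-step argument is correct and matches the paper's proof.
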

\begin{proof}
Assume that $\Phi$ has no zeros. Then $\frac{1}{\Phi} \in
\mathcal{H}(\C)$ and, by Lemma \ref{div}, $\frac{1}{\Phi}\in
\mathcal{G}_{\theta}(\C)$. By Proposition \ref{laplace},
$S=(\mathcal{L})^{-1}(\frac{1}{\Phi})\in {\mathcal
F}_\theta^{\prime}(\C)$. Then, we have $S\star T=T\star
S=\delta_0$. If we assume $T\star f=0$, then $\delta_0 \star
f=f=0.$
\end{proof}

We will throughout the rest of the paper assume that $\Phi$ has
zeros, and denote them by $\vert \alpha_0\vert\le \vert
\alpha_1\vert \le \cdots \le\vert \alpha_k\vert\le \cdots$,
$\alpha_k\not=\alpha_{k'}$ if $k\not=k'$.

We will denote by  $m_k$ be the order of multiplicity of $\Phi$ at
$\alpha_k$ and we will consider the "multiplicity variety"
$V=\{(\alpha_k, m_k)\}_{k\in \N}$ (see \cite{Be-Ga1} for an
introduction to the concept of multiplicity variety). We may
use the notation $V=\Psi^{-1}(0)$.

\begin{lemma}\label{monomials}

(i)  For all $\xi\in \C$ and $l\in \N$, we have
$<T,M_{l,\xi}>=\Phi^{(l)}(\xi)$.

(ii) Each exponential monomial $M_{l,\alpha_k}$, for $0\le l< m_k$
is $T$-mean-periodic.

\end{lemma}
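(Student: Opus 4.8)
The plan is to prove Lemma \ref{monomials} by a direct computation using the definition of the Fourier--Borel transform together with Leibniz's rule, and then to deduce part (ii) as an immediate consequence of part (i) applied to the translates of the exponential monomials.

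For part (i), I would start from the identity $\mathcal{L}(T)(\xi)=\langle T,e^{\xi\cdot}\rangle=\Phi(\xi)$, which is exactly the definition of $\Phi$. The goal is to differentiate under the duality bracket. Since $T$ is continuous on $\mathcal{F}_\theta(\C)$ and the map $\xi\mapsto e^{\xi\cdot}$ is holomorphic from $\C$ into $\mathcal{F}_\theta(\C)$ (this is where one uses that $\mathcal{F}_\theta(\C)$ is a nice Fr\'echet--Schwartz space and that the difference quotients $\frac{1}{h}(e^{(\xi+h)\cdot}-e^{\xi\cdot})$ converge to $z\mapsto z\,e^{\xi z}=M_{1,\xi}$ in the topology of $\mathcal{F}_\theta(\C)$), one may differentiate $\xi\mapsto\langle T,e^{\xi\cdot}\rangle$ term by term. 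Iterating, $\frac{d^l}{d\xi^l}\langle T,e^{\xi\cdot}\rangle=\langle T, \frac{\partial^l}{\partial\xi^l}e^{\xi\cdot}\rangle=\langle T,M_{l,\xi}\rangle$, and the left-hand side is $\Phi^{(l)}(\xi)$. The only thing to verify carefully is the holomorphy/convergence of the vector-valued map, which I would either take as known from the functional-analytic background cited (\cite{Be-Ga}, \cite{GHOR}) or check by a short estimate on $\|\cdot\|_{\theta^*,1/p}$ norms.

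For part (ii), I would compute $T\star M_{l,\alpha_k}(z)=\langle T,\tau_z M_{l,\alpha_k}\rangle$. Expanding $\tau_z M_{l,\alpha_k}(w)=(w+z)^l e^{\alpha_k(w+z)}=e^{\alpha_k z}\sum_{j=0}^{l}\binom{l}{j}z^{l-j}w^j e^{\alpha_k w}=e^{\alpha_k z}\sum_{j=0}^{l}\binom{l}{j}z^{l-j}M_{j,\alpha_k}(w)$, and using linearity of $T$ together with part (i), we get $T\star M_{l,\alpha_k}(z)=e^{\alpha_k z}\sum_{j=0}^{l}\binom{l}{j}z^{l-j}\Phi^{(j)}(\alpha_k)$. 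Since $\alpha_k$ is a zero of $\Phi$ of multiplicity $m_k$, all derivatives $\Phi^{(j)}(\alpha_k)$ vanish for $j\le m_k-1$, hence in particular for $j\le l$ when $l<m_k$; therefore the whole sum is zero and $M_{l,\alpha_k}$ is $T$-mean-periodic.

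I do not expect any serious obstacle here: the lemma is essentially a formal computation. The one point requiring a modicum of care is the justification that differentiation commutes with the action of $T$ in part (i) --- i.e.\ the holomorphy of $\xi\mapsto e^{\xi\cdot}$ as a $\mathcal{F}_\theta(\C)$-valued map and the identification of its derivatives with the $M_{l,\xi}$ --- but this follows from standard facts about the (nuclear) Fr\'echet space $\mathcal{F}_\theta(\C)$ recalled in Section \ref{prel}. Everything else is the binomial theorem and the definition of the order of a zero.
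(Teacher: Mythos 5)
Your proposal is correct and follows essentially the same approach as the paper. Part (ii) is computed identically; for part (i), the paper supplies the estimate you defer to, proving by induction that $\frac{M_{l,\xi+u}-M_{l,\xi}}{u}\to M_{l+1,\xi}$ in $\mathcal{F}_\theta(\C)$ via the elementary bound $\left|\frac{e^{uz}-1}{u}-z\right|\le |u|\,|z|^2 e^{|z|}$.
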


\begin{proof}

To prove (i), we proceed by induction on $l\ge 0$. The property is
true for $l=0$ by definition of the Fourier-Borel transform of
$T$.

Suppose the property true for $l$. Let $\xi\in \C$ be fixed. Let
us verify that the function $\displaystyle
\frac{M_{l,\xi+u}-M_{l,\xi}}{u}$ converges in
$\mathcal{F}_\theta(\C)$ to $M_{l+1,\xi}$ when $u$ tends to $0$.
For all $z\in \C$ and $u\le 1$, we have
\[
\left\vert \frac{e^{uz}-1}{u}-z\right\vert=\left\vert uz^2
\sum_{n\ge 2}\frac{(uz)^{n-2}}{n!}\right\vert \le\vert u\vert
\vert z\vert^2 e^{\vert z\vert}.
\]
This implies that
\[
\left\vert
\frac{M_{l,\xi+u}(z)-M_{l,\xi}(z)}{u}-M_{l+1,\xi}(z)\right \vert
=\vert z^le^{\xi z}\vert \left\vert
\frac{e^{uz}-1}{u}-z\right\vert \le \vert u\vert \vert
z\vert^{l+2} e^{(1+\vert \xi\vert)\vert z\vert}.
\]
Therefore, $\frac{M_{l,\xi+u}-M_{l,\xi}}{u}$ converges to
$M_{l+1,\xi}$ for the topology of $\mathcal{F}_\theta(\C)$ when
$u$ tends to $0$.

From this, we obtain
\[
\begin{split}
\Phi^{(l+1)}(\xi) & =\lim_{u\rightarrow 0}
\frac{\Phi^{(l)}(\xi+u)-\Phi^{(l)}(\xi)}{u} = \lim_{u\rightarrow
0} \frac{<T,M_{l,\xi+u}>-<T,M_{l,\xi}>}{u}\\ & =
\lim_{u\rightarrow 0} <T, \frac{M_{l,\xi+u}-M_{l,\xi}}{u}> = <T,
M_{l+1,\xi}>,
\end{split}
\]
by continuity of $T$. This completes the proof of (i).

In order to prove (ii), it is sufficient to see that $$T\star
M_{l,\alpha_k} (z)=<T, \tau_z M_{l,\alpha_k}>=e^{\alpha_k
z}\sum_{n=0}^l C_l^n z^{l-n} <T, M_{n,\alpha_k}>= e^{\alpha_k
z}\sum_{n=0}^l C_l^n z^{l-n} \Phi^{(n)}(\alpha_k).$$

\end{proof}

Our main theorem states roughly that $T$-mean-periodic functions
are series of linear combinations of the exponential monomials
$M_{l,\alpha_k}$ :

\begin{theorem}\label{gen}

(i) Any $T$-mean-periodic function $f\in \mathcal{F}_{\theta}(\C)$
 admits the following expansion as
a convergent series in  $\mathcal{F}_\theta(\C)$
\begin{equation}\label{expgen}
f(z)=\sum_{k\ge 0} \sum_{l=0}^{m_k-1} c_{k,l}\left[\sum_{j=0}^k
e^{z\alpha_j}P_{k,j,l}(z)\right],
\end{equation}
 where $P_{k,j,l}$ are the polynomials of degree $<m_j$ given by (\ref{poly1}) and (\ref{poly2}).
  The coefficients $c_{k,l}$ verify the following estimate
\begin{equation}\label{cnl}
\forall m>0,\ \ \ \ \sum_{k\ge 0} e^{{\theta}(m\vert
\alpha_k\vert)}\left(\sum_{l=0}^{m_k-1} \vert c_{k,l}\vert (\vert
\alpha_k\vert+1)^{-(m_0+\cdots+m_{k-1}+l)}\right)<+\infty
\end{equation}
and are given by
 $$c_{k,l}=<S_{k,l},f>$$
  where $S_{k,l}\in {\mathcal
F}_{\theta}^{\prime}(\C)$ is defined by
$$\mathcal{L}(S_{k,l})(\xi)=(\xi-\alpha_k)^l\prod_{n=0}^{k-1}
(\xi-\alpha_n)^{m_n}.$$

\noindent (ii) Conversely, any such serie whose coefficients
$c_{n,l}$ satisfy the estimate \eqref{cnl} converges in
$\mathcal{F}_\theta(\C)$ to a function $f$ solving the equation
\eqref{equ}.

\end{theorem}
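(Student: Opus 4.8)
The plan is to build the series coefficients as a biorthogonal-type expansion dual to the exponential monomials, and then to establish convergence via the growth estimate \eqref{cnl} together with the division property of Lemma \ref{div}. First I would set up the auxiliary functionals: for each $k\ge 0$ and $0\le l<m_k$ define $S_{k,l}\in{\mathcal F}_\theta^{\prime}(\C)$ by $\mathcal{L}(S_{k,l})(\xi)=(\xi-\alpha_k)^l\prod_{n=0}^{k-1}(\xi-\alpha_n)^{m_n}$; these lie in $\mathcal{G}_\theta(\C)$ since they are polynomials, hence by Proposition \ref{laplace} they are genuine elements of the dual. The polynomials $P_{k,j,l}$ in \eqref{poly1}--\eqref{poly2} are to be chosen exactly so that the exponential-polynomial building block $E_{k,l}(z):=\sum_{j=0}^k e^{z\alpha_j}P_{k,j,l}(z)$ is itself a solution of $T\star g=0$ (each summand is a combination of the $M_{j,\alpha_j}$ with $j$-degree $<m_j$, so Lemma \ref{monomials}(ii) applies) and so that the biorthogonality relation $<S_{k,l},E_{k',l'}>=\delta_{(k,l),(k',l')}$ holds; concretely one expands $\tau_z$ of the monomials and matches Taylor coefficients of $\mathcal{L}(S_{k,l})$ at the $\alpha_j$.

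Next I would prove the representation \eqref{expgen} for a given $T$-mean-periodic $f$. The idea is the standard one from \cite{Be-Ta}: write $\Phi=\mathcal{L}(T)$ and use that $f$ being $T$-mean-periodic forces, on the Fourier-Borel side, an interpolation condition at the variety $V=\{(\alpha_k,m_k)\}$. More precisely, one shows that the partial sums $f_N(z)=\sum_{k\le N}\sum_{l=0}^{m_k-1}<S_{k,l},f>E_{k,l}(z)$ are obtained by a Lagrange--Hermite-type interpolation of $f$ against the divisors $\prod_{n=0}^{k-1}(\xi-\alpha_n)^{m_n}$, and that $f-f_N$ is divisible (in $\mathcal{H}(\C)$) by $\prod_{n=0}^{N}(\xi-\alpha_n)^{m_n}$ after applying $\mathcal{L}$ to an appropriate representative — here Lemma \ref{div} is what guarantees the quotient stays in $\mathcal{G}_\theta(\C)$ and hence that the remainder stays in $\mathcal{F}_\theta(\C)$ with controlled growth. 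Tracking the constants through the minimum-modulus estimates in the proof of Lemma \ref{div} yields that $c_{k,l}=<S_{k,l},f>$ satisfies \eqref{cnl}: the factor $(|\alpha_k|+1)^{-(m_0+\cdots+m_{k-1}+l)}$ is precisely the reciprocal of the size of the polynomial $\mathcal{L}(S_{k,l})$ on the relevant annulus, and the $e^{\theta(m|\alpha_k|)}$ factor absorbs the growth of $f$.

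For part (ii), the converse, I would argue directly: assume coefficients $c_{k,l}$ satisfy \eqref{cnl} and estimate $\|E_{k,l}\|_{\theta^*,1/p}$ (or, in the case $\theta(x)=x$, the sup over compacta) for each building block. The point is that $P_{k,j,l}$ has degree $<m_j$ and its coefficients are controlled by the geometry of the $\alpha_n$'s with $n\le k$, so that $|E_{k,l}(z)|$ is bounded by $(|\alpha_k|+1)^{m_0+\cdots+m_{k-1}+l}$ times something like $e^{\theta^*(\text{const}\cdot|z|)}$ on the whole plane; multiplying by $|c_{k,l}|$ and summing, \eqref{cnl} gives a convergent majorant in each seminorm, so the series converges in $\mathcal{F}_\theta(\C)$. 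Since each $E_{k,l}$ solves \eqref{equ} and $T\star$ is continuous, the sum $f$ does too. Finally one checks $<S_{k,l},f>=c_{k,l}$ by biorthogonality and continuity, which also shows the expansion is the one produced in part (i), so the representation is essentially unique.

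The main obstacle I anticipate is the quantitative bookkeeping in the direct part: one must produce the explicit polynomials $P_{k,j,l}$ from a Hermite interpolation/partial-fraction decomposition of $1/\prod(\xi-\alpha_n)^{m_n}$ and then show that the remainder term $f-f_N$, after transporting through $\mathcal{L}^{-1}$ and the convolution, genuinely tends to $0$ in the Fréchet(-Schwartz) topology — this is where the clustering of the $\alpha_k$ near $\Phi$'s zero set could in principle make the interpolation constants blow up, and the reason \cite{Be-Ta} needed packet-grouping. Here the Abel summation built into the shape of \eqref{expgen} (the inner sum $\sum_{j=0}^k$) is exactly the device that tames this, so the crux is verifying that with this particular summation the estimate \eqref{cnl} is both necessary (part (i)) and sufficient (part (ii)) for convergence. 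I would isolate that estimate as the technical heart and treat the minimum-modulus input from Lemma \ref{div} as the key lever.
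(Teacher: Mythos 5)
Your proposal and the paper part ways at the foundational level, and the difference is not cosmetic. The paper never argues by partial sums and remainder estimates. It sets up a duality: it introduces the divided-difference map $\Psi$ and the composite $\alpha=\Psi\circ\rho\circ\mathcal{L}:\mathcal{F}_\theta'(\C)\to\mathcal{B}_\theta(V)$, shows $\alpha$ is continuous and surjective (Proposition \ref{alpha}), identifies $\mathcal{B}_\theta'(V)\cong\mathcal{C}_\theta(V)$ as a concrete sequence space (Lemma \ref{dualgen}), and uses Fr\'echet--Schwartz duality (Lemma \ref{fonc}) to conclude that every $T$-mean-periodic $f$ equals $\alpha^t(c)$ for a unique $c\in\mathcal{C}_\theta(V)$. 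The expansion \eqref{expgen} then falls out of the identity $f(z)=\langle c,\alpha(\delta_z)\rangle=\langle c,\Psi(\rho(g_z))\rangle$ with $g_z(\xi)=e^{z\xi}$, and the $P_{k,j,l}$ are computed by differentiating the Newton (divided-difference) form of the interpolating polynomial of $\xi\mapsto e^{z\xi}$. The estimate \eqref{cnl} is not extracted from a minimum-modulus calculation; it is literally the definition of the norm on $\mathcal{C}_\theta(V)$, which $c$ belongs to because it represents a continuous functional on $\mathcal{B}_\theta(V)$.

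The gaps in your plan are the following. First, your step ``$f-f_N$ is divisible by $\prod_{n\le N}(\xi-\alpha_n)^{m_n}$ after applying $\mathcal{L}$ to an appropriate representative'' is not well-formed: $f$ lives in $\mathcal{F}_\theta(\C)$, which is the \emph{predual}, not the domain of $\mathcal{L}$; there is no representative to transport, and the divisibility argument (Lemma \ref{div}) applies on the $\mathcal{G}_\theta$ side, i.e.\ to Fourier--Borel transforms of functionals, not to $f$ itself. Second, and more seriously, you never invoke the interpolation theorem of \cite{Ou2} (Lemma \ref{surj} in the paper), yet this is precisely what makes the restriction-plus-divided-differences map $\alpha$ surjective and hence is the reason every $T$-mean-periodic function admits an expansion at all. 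Your heuristic that ``the Abel summation built into \eqref{expgen} tames the divergence'' is the right intuition, but the mechanism is the Newton/divided-difference structure: the estimate \eqref{cnl} is exactly the dual norm attached to the divided-difference sequence space $\mathcal{B}_\theta(V)$, and without identifying that space one cannot say why the inner sum $\sum_{j=0}^k$ is the correct grouping rather than some other ad hoc packet. Third, deriving \eqref{cnl} ``by tracking constants through Lemma \ref{div}'' will not produce the sharp weight $(|\alpha_k|+1)^{-(m_0+\cdots+m_{k-1}+l)}$; the paper gets it for free from Lemma \ref{dualgen}, while your route would have to re-derive the interpolation estimates of \cite{Ou2} from scratch. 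Your converse direction (part (ii)) is essentially sound in outline, but the bound on $|E_{k,l}(z)|$ you assert again amounts to the continuity of $\Psi\circ\rho$ applied to $g_z$, which is what the paper actually uses, so even there the divided-difference machinery is doing the work that you have left implicit.
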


\begin{corollary}\label{simple}
Assume that all the multiplicities $m_k$ are equal to $1$. Then

\noindent (i) any $T$-mean-periodic function $f\in
\mathcal{F}_{\theta}(\C)$ admits the following expansion as a
 convergent series in  $\mathcal{F}_\theta(\C)$
\begin{equation}\label{expint}
f(z)=\sum_{k\ge 0}  c_k \left[ \sum_{j=0}^k e^{z\alpha_j}
\prod_{0\le n\le k, n\not=j}(\alpha_j-\alpha_n)^{-1} \right],
\end{equation}
where the coefficients $c_k$ satisfy the following estimate
\begin{equation}\label{cn}
\forall m>0, \ \ \ \sum_{k\ge 0} e^{{\theta}(m\vert
\alpha_k\vert)}\vert c_k\vert (\vert \alpha_k\vert+1)^{-k}<+\infty
\end{equation}

and are given by
 $$c_k=<S_k,f>$$ where $S_k \in {\mathcal
F}_{\theta}^{\prime}(\C)$ is defined by
$$\mathcal{L}(S_k)(\xi)=\prod_{n=0}^{k-1} (\xi-\alpha_n)^{m_n}.$$

\noindent (ii) Conversely, any such series whose coefficients
$c_k$ satisfy the estimate
 \eqref{cn}
 converges  in
$\mathcal{F}_\theta(\C)$ to a function $f$ solving the equation
\eqref{equ}.

\end{corollary}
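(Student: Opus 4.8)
The plan is to derive Corollary \ref{simple} directly from Theorem \ref{gen} by specializing to the case where every multiplicity $m_k$ equals $1$. First I would observe that when $m_k=1$ for all $k$, the inner index $l$ in \eqref{expgen} only takes the value $l=0$, so the double sum over $(k,l)$ collapses to a single sum over $k$; I would write $c_k:=c_{k,0}$ accordingly. The growth estimate \eqref{cnl} then reads $\sum_k e^{\theta(m|\alpha_k|)}|c_k|(|\alpha_k|+1)^{-(m_0+\cdots+m_{k-1})}$, and since each $m_n=1$ the exponent $m_0+\cdots+m_{k-1}$ is exactly $k$, which is precisely \eqref{cn}. Likewise the functional $S_{k,0}$ has Fourier--Borel transform $(\xi-\alpha_k)^0\prod_{n=0}^{k-1}(\xi-\alpha_n)^{m_n}=\prod_{n=0}^{k-1}(\xi-\alpha_n)$, matching the stated $S_k$.

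The only genuine computation is to check that the polynomials $P_{k,j,0}$ of Theorem \ref{gen} reduce to $P_{k,j,0}(z)=\prod_{0\le n\le k,\ n\ne j}(\alpha_j-\alpha_n)^{-1}$ (a constant, i.e. a degree-$0$ polynomial, consistent with $\deg P_{k,j,l}<m_j=1$) when all multiplicities are $1$. Since I have not yet been shown the explicit formulas \eqref{poly1} and \eqref{poly2}, I would argue instead via the defining property: $P_{k,j,l}$ is designed so that $\sum_{j=0}^k e^{z\alpha_j}P_{k,j,l}(z)$ is the $T$-mean-periodic function obtained by applying (a parametrix for) the convolution operator to $M_{l,\alpha_k}$, or equivalently it is characterized by an interpolation/partial-fraction identity. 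Concretely, in the simple-multiplicity case the relevant object is the partial fraction decomposition
\begin{equation*}
\frac{1}{\prod_{n=0}^{k}(\xi-\alpha_n)}=\sum_{j=0}^k \frac{1}{\xi-\alpha_j}\prod_{0\le n\le k,\ n\ne j}\frac{1}{\alpha_j-\alpha_n},
\end{equation*}
and taking inverse Fourier--Borel transforms term by term turns $(\xi-\alpha_j)^{-1}$ into $e^{z\alpha_j}$ (up to the standard normalization), which yields exactly the bracketed sum in \eqref{expint} with the claimed constant coefficients. So the step is to verify that the general $P_{k,j,l}$, when $m_j=1$, collapses to this Lagrange-type coefficient.

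Part (ii) is then immediate: it is just the restriction of Theorem \ref{gen}(ii) to series of this special form, since any series as in \eqref{expint} with coefficients satisfying \eqref{cn} is a series as in \eqref{expgen} with coefficients satisfying \eqref{cnl}, hence converges in $\mathcal{F}_\theta(\C)$ to a solution of \eqref{equ}. The main obstacle, such as it is, is purely bookkeeping: one must make sure the index translation $l=0$, $c_k=c_{k,0}$, $m_0+\cdots+m_{k-1}=k$, $S_k=S_{k,0}$ is carried through consistently and that the reduction of $P_{k,j,0}$ genuinely matches \eqref{poly1}--\eqml{poly2} evaluated at unit multiplicities — there is no new analytic content beyond Theorem \ref{gen}. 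I would therefore present the proof in two short paragraphs: one specializing the estimates and functionals, one specializing the polynomials via the partial-fraction identity above, followed by a one-line deduction of the converse.
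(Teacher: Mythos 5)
Your specialization of Theorem \ref{gen} to $m_k\equiv 1$ is correct and matches the paper's intended argument: the index bookkeeping ($l=0$, $c_k=c_{k,0}$, $m_0+\cdots+m_{k-1}=k$, $S_k=S_{k,0}$) is carried through exactly, and the paper itself records the simple-multiplicity form of the divided differences, $b_k=\sum_{j=0}^k a_j\prod_{0\le n\le k,\,n\ne j}(\alpha_j-\alpha_n)^{-1}$, which is precisely what your partial-fraction identity encodes. One small caveat on the phrasing: $(\xi-\alpha_j)^{-1}$ is not entire, so it has no inverse Fourier--Borel transform in this framework; what actually justifies the bracketed sum in \eqref{expint} is the Lagrange/divided-difference formula applied to $g_z(\xi)=e^{z\xi}$ at the simple knots $\alpha_0,\dots,\alpha_k$ (equivalently, setting $i=0$ and $l=0$ in \eqref{poly1}--\eqref{poly2} so that all the $\partial^0$ operators are the identity). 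The reduction and the converse in (ii) are otherwise exactly as you say.
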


\end{section}

\begin{section}{Proof of Theorem \ref{gen}}\label{proof}

We define the restriction operator $\rho$  on
$\mathcal{G}_{\theta}(\C)$ by
\[
\rho(g)=\{\frac{g^{l}(\alpha_k)}{l!}\}_{k,0\le l< m_k}, \ \
g\in \mathcal{G}_{\theta}(\C).
\]
As an immediate consequence of Lemma \ref{div}, we have the
following lemma.
\begin{lemma}\label{Ker}
The kernel of the restriction operator $\rho$ is the ideal
generated by $\Phi$ in  $\mathcal{G}_{\theta}(\C)$, i.e.,
$$\hbox{Ker }\rho=\{\Phi g, \ g\in \mathcal{G}_{\theta}(\C)\}.$$
\end{lemma}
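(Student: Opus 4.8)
The plan is to prove that $\operatorname{Ker}\rho$ is exactly the ideal $(\Phi)$ in $\mathcal{G}_\theta(\C)$. One inclusion is immediate: if $g = \Phi h$ with $h \in \mathcal{G}_\theta(\C)$, then since $\Phi$ vanishes at each $\alpha_k$ to order $m_k$, the product $\Phi h$ vanishes at $\alpha_k$ to order at least $m_k$, so all the Taylor coefficients $\frac{g^{(l)}(\alpha_k)}{l!}$ for $0 \le l < m_k$ are zero, i.e. $g \in \operatorname{Ker}\rho$. This uses only the Leibniz rule and costs nothing.

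For the reverse inclusion, suppose $g \in \mathcal{G}_\theta(\C)$ satisfies $\rho(g) = 0$, meaning $g$ vanishes at each $\alpha_k$ to order $\ge m_k$. The plan is to form the quotient $h := g/\Phi$. The key point is that $h$ is holomorphic on all of $\C$: at every point outside the zero set of $\Phi$ this is clear, and at each $\alpha_k$ the numerator vanishes to order $\ge m_k$ while the denominator vanishes to order exactly $m_k$, so the singularity is removable. Thus $h \in \mathcal{H}(\C)$ and $g = \Phi h$ with $\Phi \in \mathcal{G}_\theta(\C)$, $\Phi \not\equiv 0$ (recall we have assumed $\Phi$ has zeros, but in particular $\Phi$ is not identically zero since $T \ne 0$). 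Now Lemma \ref{div} applies verbatim — with $f$ there being our $g$ and $g$ there being our $\Phi$ — and yields $h \in \mathcal{G}_\theta(\C)$. Hence $g = \Phi h$ lies in the ideal generated by $\Phi$.

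There is essentially no obstacle here; the lemma is a straightforward packaging of Lemma \ref{div} together with the elementary fact about removable singularities of quotients of holomorphic functions. The only thing to be mildly careful about is the bookkeeping on multiplicities: that $\Phi$ vanishes to order \emph{exactly} $m_k$ at $\alpha_k$ (by definition of $m_k$ as the multiplicity) while $\rho(g) = 0$ forces $g$ to vanish to order \emph{at least} $m_k$, so the quotient has at worst a removable singularity and never a genuine pole. Once $h \in \mathcal{H}(\C)$ is established, invoking Lemma \ref{div} closes the argument immediately.
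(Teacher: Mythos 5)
Your proof is correct and matches the paper's intent exactly: the paper simply states that the lemma is an immediate consequence of Lemma \ref{div}, and you have spelled out that consequence in the standard way (easy inclusion by Leibniz, reverse inclusion by removable singularities plus Lemma \ref{div}).
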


We are going to use a characterization, obtained in \cite{Ou2}, of
the elements $a=\{a_{k,l}\}_{k,0\le l<m_{k-1}}$ belonging
to $\rho(\mathcal{G}_{\theta}(\C))$. This characterization is
given in terms of growth conditions involving the divided
differences (see \cite{Is-Ke} for further details about divided
differences).

To any discrete doubly indexed sequence $a=\{a_{k,l}\}_{k\in
\N,0\le l<m_k}$ of complex numbers, we associate  the sequence
of divided differences $\Psi(a)=\{b_{k,l}\}_{k\in \N,0\le l< m_k}$. We
recall that they are the coefficients of the Newton polynomials,
\begin{equation}\label{newton}
Q_q(\xi)=\sum_{k=0}^q \prod_{n=0}^{k-1} (\xi-\alpha_n)^{m_n}
\left(\sum_{l=0}^{m_k-1} b_{k,l} (\xi-\alpha_k)^l\right),
\end{equation}
defined, for any $q\in \N$, as the unique polynomial of degree
$m_0+\cdots+ m_q-1$  such that
$$\frac{Q_q^{(l)}(\alpha_k)}{l!}=a_{k,l},\ \ \hbox{for }\ 0\le
k\le q\ \hbox{and }\ 0\le l \le m_k-1.$$
When all the multiplicities $m_k=1$, we may give a simple formula
for the coefficients $b_k$ : $$b_k=\sum_{j=0}^k a_j \prod_{0\le
n\le k, n\not=j}(\alpha_j-\alpha_n)^{-1}.$$

\noindent In the general case (see \cite{Ou2}) we may define them  by induction by :
$$b_{0,l}=a_{0,l}, \hbox{for all}\ \  0\le l< m_0, $$
and for $k\ge 1$, 
$$b_{k,0}=\frac{a_{k,0}-Q_{k-1}(\alpha_k)}{\Pi_{k-1}(\alpha_k)},$$
$$b_{k,l}=\frac{a_{k,l}-\frac{Q_{k-1}^{(l)}(\alpha_k)}{l!}-\sum_{n=0}^{l-1}\frac{1}{(l-n)!}
\Pi_{k-1}^{(l-k)}(\alpha_j)b_{k,n}}{\Pi_{k-1}(\alpha_k)}\
\ \hbox{ for } 1\le l< m_k$$ where we have denoted by
$$\Pi_k(\xi)=\prod_{n=0}^k (\xi-\alpha_n)^{m_n}.$$

The following lemma describes the image of the map $\rho$ and is crucial for the rest of the proof. It is an easy consequence of \cite[Theorem 1.11]{Ou2}.
\begin{lemma}\label{surj}
A doubly indexed sequence $a=\{a_{k,l}\}_{k\in
\N,0\le l<m_k}$ belongs  to $\rho(\mathcal{G}_{\theta}(\C))$ if and only if 
 $$\sup_{k\in \N}  \sup_{0\le l< m_k}\vert b_{k,l}\vert (\vert
\alpha_k\vert+1)^{m_0+\cdots+m_{k-1}+l}e^{-{\theta}(m\vert
\alpha_k\vert)}<+\infty,$$ for a certain $m>0$, where
$b=\{b_{k,l}\}_{k,0\le l< m_k}=\Psi^{-1}(a)$.
\end{lemma}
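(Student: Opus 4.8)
The plan is to deduce Lemma \ref{surj} directly from \cite[Theorem 1.11]{Ou2}, so the work is essentially a matter of translating the cited interpolation theorem into the present notation and checking that the weight systems match. First I would recall precisely what Theorem 1.11 of \cite{Ou2} asserts: it characterizes the trace on the multiplicity variety $V=\{(\alpha_k,m_k)\}$ of functions in $\mathcal{G}_\theta(\C)=\cup_p E_{\theta,p}(\C)$, i.e.\ it describes exactly which doubly indexed sequences $a=\{a_{k,l}\}$ arise as $\rho(g)$ for some $g\in \mathcal{G}_\theta(\C)$, and the answer is phrased in terms of a growth bound on the associated divided differences $b=\Psi^{-1}(a)$. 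So the only thing to verify is that the growth condition in the statement of Lemma \ref{surj}, namely
\[
\sup_{k\in\N}\ \sup_{0\le l<m_k} |b_{k,l}|\,(|\alpha_k|+1)^{m_0+\cdots+m_{k-1}+l}\,e^{-\theta(m|\alpha_k|)}<+\infty
\]
for some $m>0$, is the same as the one appearing in \cite[Theorem 1.11]{Ou2}.

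The key steps, in order, are: (1) state the cited theorem; (2) identify the "canonical defining function" or weight used there with the quantity $\theta(m|z|)$ and the polynomial factors $\Pi_{k-1}$ with the products $\prod_{n=0}^{k-1}(\xi-\alpha_n)^{m_n}$ that appear in the Newton polynomials \eqref{newton}; (3) observe that the exponent $m_0+\cdots+m_{k-1}+l$ is exactly the degree of the factor $\Pi_{k-1}(\xi)(\xi-\alpha_k)^l$ multiplying $b_{k,l}$ in \eqref{newton}, so that $(|\alpha_k|+1)^{m_0+\cdots+m_{k-1}+l}$ is (up to bounded multiplicative constants absorbed by replacing $m$) a substitute for $|\Pi_{k-1}(\alpha_k)\,(\alpha_k-\alpha_k)^l|$-type normalizations; (4) invoke the one-to-one correspondence $a\leftrightarrow b$ given by the maps $\Psi,\Psi^{-1}$ recalled just above, together with Lemma \ref{Ker} (which says $\mathrm{Ker}\,\rho=\Phi\,\mathcal{G}_\theta(\C)$), to conclude that the growth bound on $b$ is precisely the necessary and sufficient condition for $a\in\rho(\mathcal{G}_\theta(\C))$.

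The main obstacle is purely bookkeeping: reconciling the homogeneous polynomial normalization $(|\alpha_k|+1)^{m_0+\cdots+m_{k-1}+l}$ with whatever precise form the bound takes in \cite{Ou2} (it may be written with $|\Pi_{k-1}(\alpha_k)|$ or with some closely related product over the interpolation nodes), and checking that the discrepancy between the two is controlled by a term of the form $e^{\theta(\varepsilon|\alpha_k|)}$ which can be absorbed by enlarging $m$. This uses only the convexity of $\theta$ and $\theta(0)=0$ (so that $\theta(am|z|)\le \text{const}\cdot\theta(m'|z|)$ for suitable $m'$), exactly as in the proof of Lemma \ref{div}. Since $\mathcal{G}_\theta(\C)$ is an inductive limit over $p$, the quantifier "for a certain $m>0$" is the natural one and matches the $\cup_p E_{\theta,p}$ structure, so no uniformity in $m$ is needed and the translation goes through directly.

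Finally, I would remark that in the special case where all $m_k=1$ the divided differences reduce to the explicit formula $b_k=\sum_{j=0}^k a_j\prod_{0\le n\le k,\,n\ne j}(\alpha_j-\alpha_n)^{-1}$ recalled above, and the condition of Lemma \ref{surj} becomes $\sup_k |b_k|\,(|\alpha_k|+1)^k e^{-\theta(m|\alpha_k|)}<+\infty$, which is the form that will be used to derive Corollary \ref{simple}; this consistency check is a useful sanity test that the normalization has been transcribed correctly.
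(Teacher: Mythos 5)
Your proposal matches the paper's own treatment exactly: the paper gives no independent proof and simply states that the lemma ``is an easy consequence of \cite[Theorem 1.11]{Ou2},'' and your plan is precisely to carry out that translation of notation and weight systems. One small quibble: your step (4) invokes Lemma \ref{Ker}, but the kernel characterization plays no role in describing the image of $\rho$ (it is used later, in Lemma \ref{fonc}); this is a harmless digression rather than a gap.
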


In order to give a  topological structure, let us
denote by $\mathcal{B}_{\theta,m}(V)$  the Banach space of all
doubly indexed sequences of complex numbers $b=\{b_{k,l}\}_{k\in
\N, 0\le l<m_k}$  such that $$\Vert b\Vert_{\theta,m}=\sup_{k\in
\N}  \sup_{0\le l< m_k}\vert b_{k,l}\vert (\vert
\alpha_k\vert+1)^{m_0+\cdots+m_{k-1}+l}e^{-{\theta}(m\vert
\alpha_k\vert)}<+\infty.$$ Let us consider the space
$\mathcal{A}_{\theta,m}(V)=\Psi^{-1}(\mathcal{B}_{\theta,m}(V))$,
that is, the space of all doubly indexed sequences of complex
numbers $a=\{a_{k,l}\}_{k\in \N, 0\le l<m_k}$  such that $$\Vert
\Psi(a) \Vert_{\theta,m}<+\infty.$$
 It is easy to see that $\mathcal{A}_{\theta,m}(V)$ endowed with the norm
 $\Vert a\Vert_{\theta,m} =\Vert \Psi(a) \Vert_{\theta,m}$ is a Banach space and that $\Psi$ is an isometry  from $\mathcal{A}_ {\theta,m}(V)$ into
 $\mathcal{B}_{\theta,m}(V)$.
Now, we define the spaces $$\mathcal A_{\theta}(V)=\cup_{p\in
\N^*}\mathcal{A}_{\theta,p}(V) \hbox{ and }
\mathcal{B}_{\theta}(V)=\cup_{p\in N^*}\mathcal{B}_{\theta,p}(V)$$
endowed with the topology of inductive limit of Banach spaces.

\noindent We define the linear map $$\alpha= \Psi\circ \rho \circ
\mathcal{L} : \mathcal{F}_{\theta}^{\prime}(\C)\rightarrow
\mathcal{B}_{\theta}(V).$$

\begin{proposition}\label{alpha}
The map $\alpha$  is continuous and surjective.

\end{proposition}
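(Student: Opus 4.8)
\textbf{Proof plan for Proposition \ref{alpha}.} The plan is to factor the statement into the three maps $\mathcal{L}$, $\rho$ and $\Psi$ and trace continuity and surjectivity through each. By Proposition \ref{laplace}, $\mathcal{L}$ is a topological isomorphism from $\mathcal{F}_\theta^\prime(\C)$ onto $\mathcal{G}_\theta(\C)$, so it suffices to prove that $\Psi\circ\rho:\mathcal{G}_\theta(\C)\to\mathcal{B}_\theta(V)$ is continuous and surjective. For surjectivity, I would argue as follows: given $b\in\mathcal{B}_\theta(V)$, set $a=\Psi^{-1}(b)$; then by construction $\Vert\Psi(a)\Vert_{\theta,m}<\infty$ for some $m>0$, so by Lemma \ref{surj} the sequence $a$ lies in $\rho(\mathcal{G}_\theta(\C))$, i.e.\ there is $g\in\mathcal{G}_\theta(\C)$ with $\rho(g)=a$, whence $(\Psi\circ\rho)(g)=\Psi(a)=b$. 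This shows $\alpha$ is onto.

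For continuity I would use the fact that all the spaces involved are inductive limits of Banach spaces and that a linear map out of such an inductive limit (here $\mathcal{G}_\theta(\C)=\cup_p E_{\theta,p}(\C)$, with $\mathcal{L}$ carrying the analogous inductive-limit decomposition of $\mathcal{F}_\theta^\prime(\C)$) is continuous as soon as its restriction to each Banach step $E_{\theta,p}(\C)$ is continuous. On $E_{\theta,p}(\C)$ one needs to show that $\rho(g)\in\mathcal{A}_{\theta,m}(V)$ with $\Vert\rho(g)\Vert_{\theta,m}\le C\Vert g\Vert_{\theta,p}$ for a suitable $m=m(p)$ and constant $C$; equivalently, after applying $\Psi$, that the divided differences $b_{k,l}$ of $\rho(g)$ satisfy $\vert b_{k,l}\vert(\vert\alpha_k\vert+1)^{m_0+\cdots+m_{k-1}+l}e^{-\theta(m\vert\alpha_k\vert)}\le C\Vert g\Vert_{\theta,p}$. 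The classical integral (Hermite) representation of divided differences expresses $b_{k,l}$ as a contour integral of $g(\xi)/\Pi_k(\xi)$ (or of $g$ against an explicit kernel built from the nodes $\alpha_0,\dots,\alpha_k$); estimating $g$ on a suitable circle by $\Vert g\Vert_{\theta,p}e^{\theta(p|\xi|)}$ and bounding the kernel below in terms of $\prod(\vert\alpha_k\vert+1)$ yields exactly the required inequality, with $m$ a fixed multiple of $p$. This is essentially the quantitative content already packaged in \cite[Theorem 1.11]{Ou2}, so I would invoke that estimate rather than redo the contour computation; the point is that the same $m$ and $C$ that appear there give the continuity of $\rho$ step by step, and hence of $\Psi\circ\rho$.

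The main obstacle is the continuity rather than the surjectivity: one must produce, for each $p$, an explicit target index $m(p)$ and a uniform bound on the divided differences of $\rho(g)$ in terms of $\Vert g\Vert_{\theta,p}$, and this requires the (already established) growth estimates for divided differences at the points $\alpha_k$ together with control of the lower bound of $\vert\Pi_k\vert$ on well-chosen circles. I would state this as a lemma extracted from \cite{Ou2}, apply it on each $E_{\theta,p}(\C)$, and then conclude by the universal property of the inductive limit topology that $\alpha=\Psi\circ\rho\circ\mathcal{L}$ is continuous on all of $\mathcal{F}_\theta^\prime(\C)$, and surjective by the argument above.
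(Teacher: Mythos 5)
Your proposal matches the paper's proof: both factor $\alpha=\Psi\circ\rho\circ\mathcal{L}$, use Proposition \ref{laplace} to handle $\mathcal{L}$, observe that $\Psi$ is by construction a topological isomorphism between $\mathcal A_\theta(V)$ and $\mathcal B_\theta(V)$, deduce surjectivity of $\rho$ from Lemma \ref{surj}, and invoke the divided-difference estimates from \cite{Ou2} for the continuity of $\rho$. The only cosmetic difference is that the paper cites \cite[Proposition 1.8]{Ou2} directly for the continuity of $\rho$, whereas you gesture at \cite[Theorem 1.11]{Ou2} and sketch the Hermite-type integral estimate it rests on; substantively the argument is the same.
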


\begin{proof}
By Proposition \ref{laplace}, we know that
$\mathcal{L}:\mathcal{F}_{\theta}^{\prime}(\C)\rightarrow
\mathcal{G}_{\theta}(\C)$ is a topological isomorphism.
By Lemma \ref{surj}, the operator $$\rho \ :\  {\mathcal
G}_{\theta}(\C)\rightarrow \mathcal{A}_{\theta}(V)$$ is surjective. It is also continuous
by \cite[Proposition 1.8]{Ou2}.
Finally, by construction, it is  clear that $$\Psi \ :\ \mathcal
A_{\theta}(V)\rightarrow {\mathcal B}_{\theta}(V)$$ is a
topological isomorphism.

\end{proof}

Recall that  $\mathcal{F}_{\theta}(\C)$ is a Fr\'echet-Schwartz
space, therefore it is reflexive. Then, the transpose $\alpha^t$
of $\alpha$ is defined from the strong dual of ${\mathcal
B}_{\theta}(V)$, denoted by ${\mathcal B}_{\theta}^{\prime}(V)$,
into $\mathcal{F}_{\theta}(\C)$.

Next, we need to characterize the dual space ${\mathcal
B}_{\theta}^{\prime}(V)$ as a space of doubly indexed sequences :

\begin{lemma}\label {dualgen}

 The space
${\mathcal B}_{\theta}^{\prime}(V)$ is topologically isomorphic through the canonical bilinear form
$$<c,b>=\sum_{k=0}^{+\infty}\sum_{l=0}^{m_k-1} c_{k,l} b_{k,l}$$to
the space $\mathcal{C}_{\theta}(V)=\cap_{p\in
\N^*}\mathcal{C}_{\theta,p}(V)$ endowed with the projective limit
topology, where, for all $p$, $\mathcal{C}_{\theta,p}(V)$ is the
Banach space of the sequences $c=\{c_{k,l}\}_{k\in \N, 0\le l<m_k}$ such
that
\begin{equation}\label{cklp}
\Vert c\Vert'_{\theta,p} :=\sum_{k\in \N} e^{{\theta}(p\vert
\alpha_k\vert)}\left(\sum_{l=0}^{m_k-1} \vert c_{k,l}\vert (\vert
\alpha_k\vert+1)^{-(m_0+\cdots+m_{k-1}+l)}\right)<+\infty.
\end{equation}
Moreover, $\mathcal{C}_{\theta}(V)$ is a Fr\'echet-Schwartz space.

\end{lemma}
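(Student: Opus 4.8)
\textbf{Proof proposal for Lemma \ref{dualgen}.}

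The plan is to identify ${\mathcal B}_{\theta}^{\prime}(V)$ with $\mathcal{C}_\theta(V)$ by the standard duality of an (LB)-space: the strong dual of an inductive limit $\cup_p \mathcal{B}_{\theta,p}(V)$ is the projective limit $\cap_p (\mathcal{B}_{\theta,p}(V))'$, so it suffices to identify each Banach dual $(\mathcal{B}_{\theta,p}(V))'$ with $\mathcal{C}_{\theta,p}(V)$ via the bilinear pairing $<c,b>=\sum_{k}\sum_{l=0}^{m_k-1} c_{k,l}b_{k,l}$. First I would observe that $\mathcal{B}_{\theta,p}(V)$ is isometrically a weighted $\ell^\infty$-space: writing the single index $n=n(k,l)$ to enumerate the pairs $(k,l)$ with $0\le l<m_k$, and setting the weight $w_n = (\vert\alpha_k\vert+1)^{m_0+\cdots+m_{k-1}+l}e^{-\theta(p\vert\alpha_k\vert)}$, we have $\Vert b\Vert_{\theta,p}=\sup_n |b_n| w_n$. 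The dual of such a weighted $\ell^\infty$ is \emph{not} all of weighted $\ell^1$ in general, but here the situation is saved because $\mathcal{B}_\theta(V)$ is the union over $p$: for the inductive limit, a linear functional is continuous iff it is bounded on some $\mathcal{B}_{\theta,p}(V)$, and the relevant subspace on which one must represent functionals is the closure of the finitely-supported sequences. So I would work with $c_0$-type spaces: note that the natural predual viewpoint gives, for each $p$, that the sequences in $\mathcal{B}_{\theta,p}(V)$ that matter are approximable by finite sequences, whence continuous functionals are exactly those given by an absolutely summable pairing, i.e.\ $\sum_n |c_n|/w_n<\infty$, which upon unwinding $w_n$ is precisely condition \eqref{cklp} with parameter $p$.

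More carefully, I would argue as follows. Given $\varphi\in(\mathcal{B}_{\theta,p}(V))'$, apply $\varphi$ to the unit vectors $e_{k,l}$ to define $c_{k,l}=\varphi(e_{k,l})$; the norm of $e_{k,l}$ in $\mathcal{B}_{\theta,p}(V)$ is $(\vert\alpha_k\vert+1)^{m_0+\cdots+m_{k-1}+l}e^{-\theta(p\vert\alpha_k\vert)}$, and testing $\varphi$ against finite sums $\sum \varepsilon_{k,l} e_{k,l}$ with $|\varepsilon_{k,l}|$ equal to the reciprocal of that norm and $\arg\varepsilon_{k,l}$ chosen to align phases shows $\sum_{k,l}|c_{k,l}|(\vert\alpha_k\vert+1)^{-(m_0+\cdots+m_{k-1}+l)}e^{\theta(p\vert\alpha_k\vert)}\le\Vert\varphi\Vert$, i.e.\ $\Vert c\Vert'_{\theta,p}<\infty$. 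Conversely any $c\in\mathcal{C}_{\theta,p}(V)$ defines a bounded functional on $\mathcal{B}_{\theta,p}(V)$ of norm exactly $\Vert c\Vert'_{\theta,p}$ by the Hölder-type estimate $|\sum c_{k,l}b_{k,l}|\le\Vert c\Vert'_{\theta,p}\Vert b\Vert_{\theta,p}$. This gives an isometric isomorphism $(\mathcal{B}_{\theta,p}(V))'\cong\mathcal{C}_{\theta,p}(V)$ for each $p$. Taking the projective limit over $p$ — and using that the duality between an (LB)-space and its strong dual exchanges inductive and projective limits, together with the fact (already invoked in the paper) that these spaces are Fréchet–Schwartz, hence reflexive, so no pathology arises — yields ${\mathcal B}_\theta^{\prime}(V)\cong\cap_p\mathcal{C}_{\theta,p}(V)=\mathcal{C}_\theta(V)$ topologically.

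Finally, for the Fréchet–Schwartz assertion: $\mathcal{C}_\theta(V)$ is a countable projective limit of Banach spaces, hence Fréchet; to get Schwartz it suffices to check that each linking map $\mathcal{C}_{\theta,p+1}(V)\hookrightarrow\mathcal{C}_{\theta,p}(V)$ is compact (equivalently that $\mathcal{B}_\theta(V)$, being the dual, is a (DFS)-space, so that $\mathcal{B}_{\theta,p}(V)\hookrightarrow\mathcal{B}_{\theta,p+1}(V)$ is compact). Compactness of the inclusion $\mathcal{B}_{\theta,p}(V)\hookrightarrow\mathcal{B}_{\theta,p+1}(V)$ follows because the ratio of weights is $e^{\theta(p\vert\alpha_k\vert)-\theta((p+1)\vert\alpha_k\vert)}\to0$ as $k\to\infty$ (using that $\theta$ is increasing and, since $r=o(\theta(r))$, genuinely growing), so a bounded set in the smaller space is "flat at infinity" in the larger one and hence relatively compact by a diagonal argument; dualizing gives compactness of the linking maps between the $\mathcal{C}_{\theta,p}(V)$. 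The main obstacle I anticipate is the first duality identification: one must be careful that the Banach dual of a weighted $\sup$-space is a weighted $\ell^1$-space only after restricting to the right closed subspace (the closure of finite sequences), and justify that passing to the inductive limit makes this restriction harmless — this is exactly where the Schwartz/reflexivity structure of the ambient spaces, already established earlier, is used, and where one should cite the standard (DFS)-duality theory (e.g.\ as in the functional-analytic references \cite{Be-Ga}, \cite{GHOR} already used in the paper).
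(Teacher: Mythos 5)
Your high-level plan — represent elements of ${\mathcal B}_{\theta}^{\prime}(V)$ by the sequence $c_{k,l}=\nu(B^{k,l})$, get $\Vert c\Vert'_{\theta,p}\le\Vert\nu\Vert'_{\theta,p}$ by testing against suitably phased finite sums, and prove the Schwartz property by compactness of linking maps via a diagonal argument — agrees with the paper's. But there is a genuine gap in the central step, and you are aware of it without actually closing it.

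The claim in your second paragraph that $(\mathcal{B}_{\theta,p}(V))'\cong\mathcal{C}_{\theta,p}(V)$ isometrically \emph{for each fixed $p$} is false, and it contradicts your own (correct) remark in the first paragraph that the dual of a weighted $\ell^\infty$ is not a weighted $\ell^1$. The map $c\mapsto\langle c,\cdot\rangle$ is only an isometric embedding $\mathcal{C}_{\theta,p}(V)\hookrightarrow(\mathcal{B}_{\theta,p}(V))'$; it is not surjective, because $(\mathcal{B}_{\theta,p}(V))'$ contains functionals vanishing on every finitely supported sequence. So testing $\varphi$ on unit vectors $e_{k,l}$ only recovers the "atomic part" of $\varphi$, and you have not shown $\varphi=\langle c,\cdot\rangle$ on all of $\mathcal{B}_{\theta,p}(V)$. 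Your proposed fix — cite "(DFS)-duality theory" and the reflexivity of the spaces "already established earlier" — is circular: the Fréchet–Schwartz property of $\mathcal{C}_{\theta}(V)$ (equivalently, the (DFS)-structure of $\mathcal{B}_{\theta}(V)$) is precisely what this lemma is supposed to establish, and it has not been proved before this point. (The only FS statement available a priori is about $\mathcal{F}_\theta(\C)$, which does not help here.) Also, your phrase "a linear functional on the inductive limit is continuous iff it is bounded on \emph{some} $\mathcal{B}_{\theta,p}(V)$" has the quantifier backwards: continuity on $\operatorname{ind}\lim_p\mathcal{B}_{\theta,p}(V)$ requires boundedness on \emph{every} step.

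The paper closes the gap with a concrete estimate instead of soft duality. Given $\nu\in\mathcal{B}_\theta'(V)$ and $b\in\mathcal{B}_{\theta,p}(V)$, one shows that the truncations $b^K=\sum_{k\le K}\sum_l b_{k,l}B^{k,l}$ converge to $b$ \emph{in the norm of $\mathcal{B}_{\theta,q}(V)$ for any $q>p$}, by the convexity bound
$\theta(p|\alpha_k|)-\theta(q|\alpha_k|)\le -(1-p/q)\theta(q|\alpha_k|)$.
Thus finitely supported sequences, while not dense in any fixed $\mathcal{B}_{\theta,p}(V)$, are dense in the inductive limit $\mathcal{B}_\theta(V)$; since $\nu$ is continuous on $\mathcal{B}_{\theta,q}(V)$ as well, $\langle\nu,b\rangle=\lim_K\langle\nu,b^K\rangle=\langle c,b\rangle$. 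This is the step missing from your argument, and it is exactly where passing to the inductive limit becomes a theorem rather than a hope. Your sketch of the Fréchet–Schwartz part is otherwise sound and essentially the paper's (the convexity inequality again makes the tails of a bounded $\mathcal{C}_{\theta,p+1}$-set uniformly small in the $\mathcal{C}_{\theta,p}$-norm), though note the paper checks compactness directly on the $\mathcal{C}$ side rather than dualizing from $\mathcal{B}$.
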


\begin{proof}

 Let us
show that $\beta: \mathcal{C}_{\theta}(V)\rightarrow {\mathcal
B}_{\theta}^{\prime}(V)$ defined  by
$$<\beta(c),b>=<c,b>=\sum_{k=0}^{+\infty}\sum_{l=0}^{m_k-1} c_{k,l}
b_{k,l}$$ is a topological isomorphism.

Let  $c=\{c_{k,l}\}_{k,
0\le l<m_k}$ be an element of $\mathcal{C}_{\theta}(V)$ and
$b=\{b_{k,l}\}_{k, 0\le l<m_k} \in \mathcal{B}_{\theta,p}(V)$, for
a certain $p$. For any $k\ge 0$, we have, by definition of $
\Vert b\Vert_{\theta,p}$,
\[
 \sum_{l=0}^{m_k-1}\vert b_{k,l} c_{k,l}\vert \le e^{\theta(p \vert \alpha_k\vert)} \Vert b\Vert_{\theta,p} \sum_{l=0}^{m_k-1}\vert c_{k,l}\vert (\vert \alpha_k\vert+1)^{-(m_0+\cdots+m_{k-1}+l)}.
\]
Using the estimate \eqref{cklp}, we see that the sum converges
(absolutely) and that
 \[  \vert <c,b>\vert \le \Vert c\Vert'_{\theta,p} \Vert b\Vert_{\theta,p}.
 \]
This shows the continuity of $\beta$. Let $B^{k,l}$ be the doubly
indexed sequence of $\C$ defined by (using the Kronecker symbols)
:
\begin{equation}\label{Bnl}
B^{k,l}=\{\delta_{kj}\delta_{ln}\}_{j,0\le n<m_j}.
\end{equation}
We easily see that $B^{k,l}\in \mathcal{B}_{\theta,p}(V)$.
For all $k$ and $0\le l<m_k$, we have $c_{k,l}=<\beta(c),B^{k,l}>$. It is then clear that $\beta$ is injective.
 
 Conversely, to an element
$\nu \in {\mathcal B}_\theta^{\prime}(V)$, consider the doubly
indexed sequence $c=\{c_{k,l}\}_{k, 0\le l<m_k}$ defined by
$$c_{k,l}=<\nu, B^{k,l}>$$
To verify that $c\in \mathcal{C}_{\theta}(V)$, let $p\in \N^*$ be fixed and
define $\tilde b=\{\tilde b_{k,l}\}_{k, 0\le l<m_k}$ by
$$\tilde b_{k,l}=e^{{\theta}(p\vert \alpha_k\vert)}\frac{\bar
c_{k,l}}{\vert c_{k,l}\vert}(\vert
\alpha_k\vert+1)^{-(m_0+\cdots+m_{k-1}+l)} \ \hbox{if }
c_{k,l}\not=0, \ \ \tilde b_{k,l}=0 \ \hbox{otherwise}.$$ It is clear
that $\tilde b\in \mathcal{B}_{\theta,p}(V)$ and that $\Vert
\tilde b\Vert_{\theta,p}\le 1.$
Therefore, all the finite sequences $\tilde b^{K}=\sum_{k=0}^K
\sum_{l=0}^{m_k-1} \tilde b_{k,l}B^{k,l}$ satisfy $$\Vert
\tilde b^{K}\Vert_{\theta,p}\le 1.$$ Denoting by $\Vert \nu\Vert'_{\theta,p}$ the
 norm in ${\mathcal B}_{\theta,p}^{\prime}(V)$,  we have, for all $K$, $$\vert <\nu, \tilde b^{K}>\vert
\le \Vert \nu\Vert'_{\theta,p} \Vert \tilde b^{K}\Vert_{\theta,p}\le \Vert
\nu\Vert'_{\theta,p}.$$ 
On the other hand, $$<\nu, \tilde b^{K}>=\sum_{k=0}^K \sum
_{l=0}^{m_k-1} \tilde b_{k,l}<\nu, B^{k,l}>=\sum_{k=0}^K
e^{{\theta}(p\vert \alpha_k\vert)}\sum _{l=0}^{m_k-1} \vert
c_{k,l}\vert(\vert \alpha_k\vert+1)^{-(m_0+\cdots+m_{k-1}+l)},$$ by
definition of $\tilde b_{k,l}$. Letting $K$ tend to infinity, we obtain
that $c\in \mathcal{C}_{\theta,p}(V)$ and that
\begin{equation}\label{beta-1}
\Vert c\Vert'_{\theta,p}\le \Vert \nu\Vert'_{\theta,p}.
\end{equation}
Consider now an element $b=\{b_{k,l}\}_{k, 0\le l<m_k}$ of $\mathcal{B}_{\theta,p}(V)$ and put 
$$b^K=\sum_{k=0}^K \sum_{l=0}^{m_k-1} b_{k,l}B^{k,l}.$$  Let $q$ be an integer strictly larger than $p$. Note that by convexity of $\theta$, for all $k$ the following inequality holds
\begin{equation}\label{convexity}
-\theta(q\vert\alpha_k\vert)+\theta(p\vert \alpha_k\vert)\le -(1-p/q)\theta(q\vert\alpha_k\vert).
\end{equation}
Using this inequality, we find
$$\Vert b-b^{K}\Vert_{\theta,q}\le \Vert b\Vert_{\theta,p}\sup_{k>K} e^{-\theta(q\vert\alpha_k\vert)+\theta(p\vert \alpha_k\vert)}\le \Vert b\Vert_{\theta,p} e^{-(1-p/q)\theta(q\vert\alpha_K\vert)}.$$
We readily deduce that $b^K$ converges to $b$ when $K$ tends towards infinity and that
$$<\nu,b>=\sum_{k=0}^{+\infty} \sum_{l=0}^{m_k-1} b_{k,l}<\nu,B^{k,l}>=<c,b>.$$
We conclude that $\beta(c)=\nu$ and that $\beta$ is surjective.
The continuity of $\beta^{-1}$
is a direct consequence of the inequality (\ref{beta-1}).

In order to prove that  $\mathcal{C}_{\theta}(V)$ is a
Fr\'echet-Schwartz space, in view of \cite[Proposition
1.4.8.]{Be-Ga}, it is sufficient to see that, for any $p\in \N^*$,
the canonical injection $$i_p : \mathcal{C}_{\theta,p+1}(V)
\rightarrow \mathcal{C}_{\theta,p}(V)$$ is compact.
 Let $\{c^n\}_n$ be a sequence of elements in $\mathcal{C}_{\theta,p+1}(V)$ such that, for all $n$, $\Vert c^n\Vert_{\theta,p+1}\le 1$.
It suffices to show that one can extract a subsequence of
$\{c^n\}_n$ converging in $\mathcal{C}_{\theta,p}(V)$.

It is easy to see that, for all $k\in \N$ and $0\le l<m_k$ the
sequence $\{c^n_{k,l}\}_n$ is bounded. Thus, up to taking
a subsequence, we may assume  that
$c^n_{k,l}$ converges to a certain $c_{k,l}\in \C$. Putting
$c=\{c_{k,l}\}_{k,0\le l<m_k}$, we readily see that $c\in
\mathcal{C}_{\theta,p+1}(V)$ and $\Vert c \Vert_{\theta,p+1}\le
1$.

Let us verify that $\Vert c^n-c\Vert_{\theta,p}$ tends to $0$ when
$n$ tends to infinity.
We assume that $\vert \alpha_k\vert \rightarrow \infty$,
otherwise, the result is trivial. Then, again using inequality (\ref{convexity}) we find that
$e^{\theta(p\vert \alpha_k\vert)-\theta((p+1)\vert
\alpha_k\vert)}$ tends to $0$ when $k$ tends towards infinity. Let
$\varepsilon>0$. For a certain $K\in \N$ and for all $k\ge K$,
$e^{\theta(p\vert \alpha_k\vert)-\theta((p+1)\vert
\alpha_k\vert)}<\frac{\varepsilon}{4}$. Thus, for all $n\in \N$,
\[
\begin{split}
& \sum_{k\ge K} e^{\theta(p\vert
\alpha_k\vert)}\left(\sum_{l=0}^{m_k-1} \vert
c^n_{k,l}-c_{k,l}\vert (\vert
\alpha_k\vert+1)^{-(m_0+\cdots+m_{k-1}+l)}\right)\\ \le
&\frac{\varepsilon}{4}  \sum_{k\ge K}e^{\theta((p+1)\vert
\alpha_k\vert)}\left(\sum_{l=0}^{m_k-1} \vert
c^n_{k,l}-c_{k,l}\vert (\vert
\alpha_k\vert+1)^{-(m_0+\cdots+m_{k-1}+l)}\right)\\ \le
&\frac{\varepsilon}{4} \Vert c^n-c\Vert_{\theta,p+1}\le
\frac{\varepsilon}{4} (\Vert c^n\Vert_{\theta,p+1}+\Vert
c\Vert_{\theta,p+1})\le \frac{\varepsilon}{2}.\\
\end{split}
\]
Moreover, for a certain $N\in \N$ and for all $n\ge N$, we have
\[
\sum_{k=0}^{K-1} e^{{\theta}(p\vert
\alpha_k\vert)}\left(\sum_{l=0}^{m_k-1} \vert
{c^n}_{k,l}-c_{k,l}\vert (\vert
\alpha_k\vert+1)^{-(m_0+\cdots+m_{k-1}+l)}\right)\le
\frac{\varepsilon}{2}.
\]
Finally, for $n\ge N$, $\Vert c^n-c\Vert_{\theta,p}<\varepsilon$.

\end{proof}

From now on, we will identify ${\mathcal B}_{\theta}^{\prime}(V)$
with the space $\mathcal{C}_{\theta}(V)$. 
The next step is to prove the following lemma :

\begin{lemma}\label{fonc}

\noindent (i) $\alpha^t$ is a topological isomorphim onto its
image and $\Im \alpha^t= ($Ker $\alpha)^\circ$, the orthogonal
space of Ker $\alpha$ .

\noindent (ii) $\hbox{Ker }\alpha=\{T\star U,\ \ U\in
\mathcal{F}_{\theta}^{\prime}(\C)\}.$

\noindent (iii) (Ker $\alpha)^\circ$=Ker $T\star=\{f\in
\mathcal{F}_{\theta}(\C)\ \vert \ T\star f =0\}.$
\end{lemma}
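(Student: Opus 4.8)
The plan is to establish the three assertions in the order (ii), (iii), (i): part (ii) is purely algebraic, part (iii) is a short deduction from (ii) using commutativity of the convolution product, and part (i) is the only genuinely functional-analytic point. For (ii), since $\mathcal L$ and $\Psi$ are topological isomorphisms one has $\operatorname{Ker}\alpha=\operatorname{Ker}(\Psi\circ\rho\circ\mathcal L)=\mathcal L^{-1}(\operatorname{Ker}\rho)$, and by Lemma~\ref{Ker} the kernel $\operatorname{Ker}\rho$ is the ideal $\Phi\,\mathcal G_\theta(\C)$. As $\Phi=\mathcal L(T)$, as $\mathcal L$ maps $\mathcal F_\theta^\prime(\C)$ onto $\mathcal G_\theta(\C)$, and as $\mathcal L(T\star U)=\mathcal L(T)\mathcal L(U)=\Phi\,\mathcal L(U)$, the image under $\mathcal L$ of $\{T\star U:\ U\in\mathcal F_\theta^\prime(\C)\}$ is exactly $\Phi\,\mathcal G_\theta(\C)=\operatorname{Ker}\rho$; applying $\mathcal L^{-1}$ gives $\operatorname{Ker}\alpha=\{T\star U:\ U\in\mathcal F_\theta^\prime(\C)\}$, which is (ii).

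For (iii), I would use the reflexivity of $\mathcal F_\theta(\C)$ to identify it with the strong dual of $\mathcal F_\theta^\prime(\C)$. By (ii), a function $f\in\mathcal F_\theta(\C)$ lies in $(\operatorname{Ker}\alpha)^\circ$ exactly when $<T\star U,f>=0$ for every $U\in\mathcal F_\theta^\prime(\C)$. Since $\mathcal F_\theta^\prime(\C)$ is a commutative convolution algebra, $<T\star U,f>=<U\star T,f>=<U,T\star f>$, where $T\star f$ denotes the function $z\mapsto<T,\tau_z f>$. Hence $f\in(\operatorname{Ker}\alpha)^\circ$ if and only if $<U,T\star f>=0$ for all $U\in\mathcal F_\theta^\prime(\C)$, and, because $\mathcal F_\theta^\prime(\C)$ separates the points of $\mathcal F_\theta(\C)$, this is equivalent to $T\star f=0$, i.e.\ to $f\in\operatorname{Ker}(T\star)$.

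For (i), the relevant structural facts are that $\mathcal F_\theta(\C)$ and $\mathcal C_\theta(V)=\mathcal B_\theta^\prime(V)$ (Lemma~\ref{dualgen}) are reflexive Fréchet--Schwartz spaces, so that $\mathcal F_\theta^\prime(\C)$ and $\mathcal B_\theta(V)$ are (DFS)-spaces. Since $\alpha$ is a continuous linear surjection (Proposition~\ref{alpha}), the open mapping theorem available in this setting shows that $\alpha$ is a topological homomorphism, and the standard duality theorem for surjective homomorphisms then yields at once that $\alpha^t$ is a topological isomorphism onto its image and that $\operatorname{Im}\alpha^t=(\operatorname{Ker}\alpha)^\circ$. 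Equivalently, one may view $\alpha$ as the bitranspose of $\alpha^t:\mathcal C_\theta(V)\to\mathcal F_\theta(\C)$, a map between Fréchet spaces: surjectivity of $\alpha=(\alpha^t)^t$ forces, by the closed range theorem, $\alpha^t$ to be injective with closed range, hence a topological isomorphism onto $\overline{\operatorname{Im}\alpha^t}=(\operatorname{Ker}\alpha)^\circ$, the last equality holding because a closed linear subspace of $\mathcal F_\theta(\C)$ is weakly closed and $\operatorname{Im}\alpha^t\subseteq(\operatorname{Ker}\alpha)^\circ$ always. The one step that requires genuine care is precisely this one, namely checking that $\alpha$ is a topological homomorphism --- equivalently that $\alpha^t$ has closed range --- so that the open-mapping/closed-range machinery legitimately applies; once this is granted, parts (i)--(iii) assemble by routine manipulation of transposes and polars.
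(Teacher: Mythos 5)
Your proof is correct and follows essentially the same route as the paper's: part~(ii) is verbatim the factorization $\operatorname{Ker}\alpha=\mathcal L^{-1}(\operatorname{Ker}\rho)$ together with Lemma~\ref{Ker} and multiplicativity of $\mathcal L$; part~(iii) uses the identity $<T\star U,f>=<U,T\star f>$ in both directions (the paper specializes $U=\delta_z$ for the forward implication, which is just the concrete form of your ``$\mathcal F_\theta^\prime(\C)$ separates points'' argument); and part~(i) is the duality-theoretic fact that a continuous linear surjection between a (DFS)-space and a (DFS)-space is a topological homomorphism, so its transpose is a topological isomorphism onto $(\operatorname{Ker}\alpha)^\circ$ --- which the paper simply cites as \cite[Proposition 1.4.12]{Be-Ga} while you unwind the open-mapping/closed-range argument behind it. No gap.
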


\begin{proof}

\noindent (i)  From Proposition \ref{alpha},  $\alpha$ is a
surjective continuous linear map. Therefore, $\alpha^t$ is a
topological isomorphism onto its image and $\Im \alpha^t= ($Ker
$\alpha)^\circ$ (see \cite[Proposition 1.4.12]{Be-Ga}).

\noindent (ii) Recalling Remark \ref{Ker}, we have $$\hbox{Ker
}\alpha=\hbox{Ker }(\rho\circ
\mathcal{L})=\mathcal{L}^{-1}(\hbox{Ker}\rho)=\{T \star
\mathcal{L}^{-1}(g), \ g\in \mathcal{G}_{\theta}(\C)\})=\{T\star
U,\ \ U\in \mathcal{F}_{\theta}^{\prime}(\C)\}.$$

\noindent (iii) Let $f$ be an element of (Ker $\alpha)^\circ$. For
all $z\in \C$, $$(T\star f )(z)=<T,\tau_z f>=<T,\delta_z\star
f>=<T\star \delta_z,f>=0,$$ using the fact that $T\star  \delta_z
\in$ Ker $\alpha$.

Conversely, let $f\in  \mathcal{F}_{\theta}(\C)$ be such that
$T\star f=0$ and let  $U\in \mathcal{F}_{\theta}^{\prime}(\C)$. We
have $$<T\star U,f>=<U, T\star f>=0.$$ This shows that $f\in $(Ker
$\alpha)^\circ$ and concludes the proof of the lemma.

\end{proof}

Let us proceed with the proof of Theorem \ref{gen}.

(i) Let  $f \in \mathcal{F}_{\theta}(\C)$ be a $T$-mean-periodic
function, that is, $f\in$ Ker $T\star$. From Lemmas \ref{fonc} and
\ref{dualgen}, there is a unique sequence $c\in
\mathcal{C}_{\theta}(V)$ such that $f=\alpha^t(c)$.

For $z\in \C$, denoting by $\delta_z$ the Dirac measure at $z$, we
have
$$f(z)=<\delta_z,f>=<\delta_z,\alpha^t(c)>=<c,\alpha(\delta_z)>=<c,\Psi(\rho(g_z))>$$
where we have denoted by $g_z=\mathcal{L}(\delta_z)$, that is, the
function in $\mathcal{G}_{\theta}(\C)$ defined by $g_z(\xi)=e^{z
\xi}$.

Let us compute $\Psi(\rho(g_z))=b(z)=\{b_{k,l}(z)\}_{k,0\le l<
m_k}$, which is an element of ${\mathcal B}_{\theta}(V)$. By well
know formulas about Newton polynomials (See, for example
\cite[Definition 6.2.8]{Be-Ga}), we have, for $k\in \N$, and
denoting by $$\partial_j ^m=
\frac{1}{m!}\frac{\partial^m}{\partial \alpha_j^m},$$ for $0\le
l<m_k$,
$$b_{k,l}(z)=\partial_0^{m_0-1}\cdots\partial_{k-1}^{m_{k-1}-1}\partial_k^l\left(\sum_{j=0}^k
e^{z\alpha_j}\prod_{0\le n \le k,
n\not=j}(\alpha_j-\alpha_n)^{-1}\right)=\sum_{j=0}^k e^{z\alpha_j}
P_{k,j,l}(z),$$ where we have denoted by, for $j<k$,
\begin{equation}\label{poly1}
P_{k,j,l}(z)=\sum_{i=0}^{m_j-1}\frac{z^i}{i!}
\partial_i^{m_j-1-i}\left(\prod_{0\le n\le k-1,
n\not=j}(\alpha_j-\alpha_n)^{-m_n}(\alpha_j-\alpha_k)^{-(l+1)}\right)
\end{equation}
and
\begin{equation}\label{poly2}P_{k,k,l}(z)=\sum_{i=0}^l
\frac{z^i}{i!}\partial_k^{l-i}\left(\prod_{0\le n\le
k-1}(\alpha_k-\alpha_n)^{-m_n}\right).
\end{equation}
 Thus, $$f(z)=\sum_{k\ge 0}
\left(\sum_{l=0}^{m_k-1}c_{k,l} b_{k,l}(z)\right) =\sum_{k\ge
0}\left(\sum_{l=0}^{m_k-1}  c_{k,l} \sum_{j=0}^k e^{z\alpha_j}
P_{k,j,l}(z)\right)$$ and the equality \eqref{expgen} is established.
Let us now verify the convergence in
$\mathcal{F}_\theta(\C)$ of the series.

{\it Case where $\theta(x)=x$}. Here,
$\mathcal{F}_\theta(\C)=\mathcal{H}(\C)$. We have to verify that
the serie converges uniformly on every compact of $\C$. Let $p\in
\N^*$ and $z\in \C$, $\vert z\vert\le p$.

 We have, for all $\xi\in \C$, $\vert g_z(\xi)\vert=\vert e^{z\xi}\vert \le e^{p\vert \xi\vert}$, that is,
$$\Vert g_z\Vert_{\theta,p}\le 1.$$ Thus, by continuity of
$\Psi\circ \rho$, there exists $p'\in \N^*$ and $C_p>0$ such that
$$\Vert b(z)\Vert_{\theta,p'}\le C_p \Vert g_z\Vert_{\theta,p}\le
C_p.$$ For all $k\ge 0$, we have
\[
\begin{split}
\sum_{l=0}^{m_k-1} \vert c_{k,l}b_{k,l}(z)\vert &\le
\Vert b(z)\Vert_{\theta,p'} e^{\theta(p'\vert
\alpha_k\vert)}\sum_{l=0}^{m_k-1} \vert c_{k,l}\vert
(\vert \alpha_k\vert+1)^{-(m_0+\cdots+m_{k-1}+l)}\\
%& \le C_p \sum_{l=0}^{m_n-1}  \vert c_{n,l}\vert\vert \alpha_n\vert^{m_1+\cdots+m_{k-1}+l}e^{-\theta(p'\vert \alpha_n\vert)}.
\end{split}
\]

We obtain $$\sup_{\vert z\vert \le p}\sum_{l=0}^{m_k-1}
\vert c_{k,l} b_{k,l}(z)\vert \le C_p e^{{\theta}(p'\vert
\alpha_k\vert)}\sum_{l=0}^{m_k-1}  \vert
c_{k,l}\vert(\vert
\alpha_k\vert+1)^{-(m_0+\cdots+m_{k-1}+l)}$$ Recalling that $c\in {\mathcal
C}_{\theta}^{\prime}(V)$, the right term is the general term of a
convergent serie, thus, the right-hand side of \eqref{expgen} is
 convergent in $\mathcal{F}_\theta(\C)$. Moreover,
$$\sup_{\vert z\vert \le p}\sum_{k\ge 0} \sum_{l=0}^{m_k-1}
\vert c_{k,l} b_{k,l}(z)\vert \le C_p \Vert c\Vert'_{\theta,p'}.$$

{\it Case where $\theta$ is a Young function}.
For any $p\in \N^*$,  observe that $$\Vert g_z\Vert_{\theta,p}\le
e^{\theta^*(\frac{1}{p}\vert z\vert)}.$$ Thus, by continuity of
$\Psi\circ \rho$, there exists $p'\in \N^*$ and $C_p>0$ such that
$$\Vert b(z)\Vert_{\theta,p'}\le C_p \Vert g_z\Vert_p\le C_p
e^{\theta^*(\frac{1}{p}\vert z\vert)}.$$ For all $k\in \N$ and
$z\in \C$, we have
\[
 \sum_{l=0}^{m_k-1} \vert c_{k,l}b_{k,l}(z) \vert  \le
\Vert b(z)\Vert_{\theta,p'}e^{-\theta(p'\vert
\alpha_k\vert)}\sum_{l=0}^{m_k-1} \vert c_{k,l}\vert
(\vert \alpha_k\vert+1)^{m_0+\cdots+m_{k-1}+l}\\
%& \le C_p e^{\theta^*(\frac{1}{p}\vert z\vert)}\sum_{l=0}^{m_n-1} \vert c_{n,l}\vert\vert \alpha_n\vert^{m_1+\cdots+m_{k-1}+l}e^{-\theta(p'\vert \alpha_n\vert)} .
\]
We obtain 
\[ \sup_{z\in \C} \sum_{l=0}^{m_k-1} \vert c_{k,l} b_{k,l}(z)
\vert e^{-\theta^*(\frac{1}{p}\vert z\vert)}\le C_p
e^{-{\theta}(p'\vert
\alpha_k\vert)}\sum_{l=0}^{m_k-1}\vert c_{k,l}\vert (\vert
\alpha_k\vert+1)^{m_0+\cdots+m_{k-1}+l} .
\]
 As in the previous case, we
deduce that the right-hand side of \eqref{expgen} is absolutely
convergent in $\mathcal{F}_\theta(\C)$. Moreover, 
\[
\sup_{z\in
\C}\sum_{k\ge 0}\sum_{l=0}^{m_k-1} \vert  c_{k,l} b_{k,l}(z)\vert
\le C_p \Vert c\Vert'_{\theta,p'}.
\]
 In order to find an explicit
formula for the coefficients $c_{n,l}$, consider the elements
$B^{k,l}$ of $\mathcal{B}_{\theta}(\C)$ defined by \eqref{Bnl} and
observe that,  by the definition of the Newton polynomials (see
\eqref{newton}) with respect to the coefficients of $B^{k,l}$, for
all $q\ge k$, we have $$Q_q(\xi)=(\xi-\alpha_k)^l\prod_{l=0}^{k-1}
(\xi-\alpha_l)^{m_l}$$
and for $q<k$, $Q_q=0$. We readily deduce that
$\alpha(S_{k,l})=\Psi\circ\rho\circ\mathcal{L}(S_{k,l})=B^{k,l}.$

Now, for all $k\in \N$ and $0\le l<m_k$,
$$<S_{k,l},f>=<S_{k,l},\alpha^t(c)>=<\alpha(S_{k,l}),c>=<B^{k,l},c>=c_{k,l}.$$

(ii) The converse part is easily deduced from the estimates in the proof of (i) and
Lemma \ref{monomials}.

\end{section}

\begin{section}{Case where $V$ is an interpolating variety.}\label{intcase}

 \begin{definition}\label{intvar}
 We say that $V$ is an interpolating variety for ${\mathcal G}_{\theta}(\C)$ if, for any doubly indexed sequence $a=\{a_{k,l}\}_{k\in \N, 0\le l<m_k}$  such that, for a certain $m>0$,
$$\sup_{k\in \N}  \sum_{l=0}^{m_k-1}\vert a_{k,l}\vert
e^{-{\theta}(m \vert \alpha_k\vert)}<+\infty,$$ there exists a function
$g\in {\mathcal G}_{\theta}(\C)$ such that, for all $k$ and all
$0\le l<m_k-1$, $$\frac{g^{l}(\alpha_k)}{l!}=a_{k,l}.$$
\end{definition}
We assume from now on that $V$ is an interpolating variety for
${\mathcal G}_{\theta}(\C)$. Then we have the following result :

\begin{theorem}\label{int}

\noindent (i) Any $T$-mean-periodic function $f\in
\mathcal{F}_{\theta}(\C)$ admits the following expansion as a
 convergent series in  $\mathcal{F}_\theta(\C)$
\begin{equation}\label{expint}
f(z)=\sum_{k\ge 0}  e^{z\alpha_k}\sum_{l=0}^{m_k-1}
d_{k,l}\frac{z^l}{l!} ,
\end{equation}
 where the coefficients $a_{k,l}$ verify the following estimate :
\begin{equation}\label{dkj}
\sum_{k\ge 0} e^{{\theta}(m\vert
\alpha_k\vert)}\left(\sum_{l=0}^{m_k-1} \vert d_{k,l}\vert \right)
<+\infty
\end{equation}
for every $m>0$. Moreover, for all $k\in \N$ and $0\le l<m_k$,
we have the equality $$d_{k,l}=<T_{k,l},f>$$ where $T_{k,l}\in
\mathcal{F}_{\theta}^{\prime}(\C)$ is defined by
$$\mathcal{L}(T_{k,l})(\xi)=\frac{m_k!}{\Phi^{(m_k)}(\alpha_k)}\frac{\Phi(\xi)}{(\xi-\alpha_k)^{m_k-l}},$$

\noindent (ii) Conversely, any such series whose coefficients
$d_{k,l}$ satisfy these estimate \eqref{dkj} converges in
$\mathcal{F}_\theta(\C)$ to a function $f$ solving the equation
\eqref{equ}.

\end{theorem}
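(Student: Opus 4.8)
The plan is to deduce Theorem~\ref{int} from Theorem~\ref{gen} by exploiting the hypothesis that $V$ is an interpolating variety, which should simplify the divided-difference machinery: when $V$ is interpolating, the restriction map $\rho$ is already surjective onto the \emph{naturally} weighted sequence space (the one with weights $e^{-\theta(m|\alpha_k|)}$, no divided differences), so the intermediate space $\mathcal{A}_\theta(V)$ coincides with that simpler space and the map $\Psi$ of divided differences becomes irrelevant to the growth estimates. Concretely, I would first redo the functional-analytic setup of Section~\ref{proof} with $\Psi$ replaced by the identity: define $\tilde\alpha=\rho\circ\mathcal{L}:\mathcal{F}_\theta^\prime(\C)\to \tilde{\mathcal B}_\theta(V)$, where $\tilde{\mathcal B}_{\theta,m}(V)$ is the Banach space of sequences $a=\{a_{k,l}\}$ with $\sup_k \sum_l |a_{k,l}|e^{-\theta(m|\alpha_k|)}<\infty$. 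By Definition~\ref{intvar} and \cite[Proposition~1.8]{Ou2}, $\tilde\alpha$ is continuous and surjective, exactly as in Proposition~\ref{alpha}.

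Next I would compute the dual $\tilde{\mathcal B}_\theta^\prime(V)$ as a sequence space, mimicking Lemma~\ref{dualgen}: it is topologically isomorphic, via the pairing $\langle d,a\rangle=\sum_k\sum_l d_{k,l}a_{k,l}$, to the Fr\'echet--Schwartz space $\mathcal{D}_\theta(V)=\cap_p\mathcal{D}_{\theta,p}(V)$ of sequences $d$ with $\sum_k e^{\theta(p|\alpha_k|)}\sum_l|d_{k,l}|<\infty$ for all $p$ — which is precisely estimate~\eqref{dkj}. The proof is the verbatim analogue of Lemma~\ref{dualgen} (introduce the Kronecker sequences $B^{k,l}$, dominate, use the convexity inequality~\eqref{convexity} for the Schwartz/compact-injection part). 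Then, by the same reasoning as in Lemma~\ref{fonc}, $\Ker\tilde\alpha=\{T\star U:U\in\mathcal{F}_\theta^\prime(\C)\}$ (now using that $\Ker\rho$ is the ideal generated by $\Phi$, Lemma~\ref{Ker}), and $(\Ker\tilde\alpha)^\circ=\Ker(T\star)$, the space of $T$-mean-periodic functions; and $\tilde\alpha^t$ is a topological isomorphism of $\mathcal{D}_\theta(V)$ onto $(\Ker\tilde\alpha)^\circ$.

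With this in hand, part (i) follows: given a $T$-mean-periodic $f$, there is a unique $d\in\mathcal{D}_\theta(V)$ with $f=\tilde\alpha^t(d)$, and evaluating at $\delta_z$ gives
\[
f(z)=\langle d,\tilde\alpha(\delta_z)\rangle=\langle d,\rho(g_z)\rangle
=\sum_{k\ge 0}\sum_{l=0}^{m_k-1} d_{k,l}\,\frac{g_z^{(l)}(\alpha_k)}{l!}
=\sum_{k\ge 0} e^{z\alpha_k}\sum_{l=0}^{m_k-1} d_{k,l}\,\frac{z^l}{l!},
\]
since $g_z(\xi)=e^{z\xi}$ has $g_z^{(l)}(\alpha_k)/l!=z^l e^{z\alpha_k}/l!$; convergence in $\mathcal{F}_\theta(\C)$ is obtained by the same two-case estimate as in the proof of Theorem~\ref{gen} (bound $\|\rho(g_z)\|_{\theta,p'}$ by $C_p$ resp.\ $C_p e^{\theta^*(|z|/p)}$, then use~\eqref{dkj}). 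For the explicit formula $d_{k,l}=\langle T_{k,l},f\rangle$ I would check that $\tilde\alpha(T_{k,l})=B^{k,l}$, i.e.\ that $\rho(\mathcal{L}(T_{k,l}))$ is the Kronecker sequence: this is a direct computation using $\mathcal{L}(T_{k,l})(\xi)=\tfrac{m_k!}{\Phi^{(m_k)}(\alpha_k)}\tfrac{\Phi(\xi)}{(\xi-\alpha_k)^{m_k-l}}$, noting $\Phi$ vanishes to order $m_k$ at $\alpha_k$ and to order $m_j$ at each $\alpha_j$, $j\ne k$, so the function vanishes to order $\ge m_j$ at $\alpha_j$ for $j\ne k$ and the $l$-th Taylor coefficient at $\alpha_k$ is $1$ by the normalization; one also needs $\mathcal{L}(T_{k,l})\in\mathcal{G}_\theta(\C)$, which holds by the division Lemma~\ref{div}. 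Then $\langle T_{k,l},f\rangle=\langle T_{k,l},\tilde\alpha^t(d)\rangle=\langle\tilde\alpha(T_{k,l}),d\rangle=\langle B^{k,l},d\rangle=d_{k,l}$. Part (ii) is the converse: given $d$ satisfying~\eqref{dkj}, the estimates just used show the series converges in $\mathcal{F}_\theta(\C)$, and each summand $e^{z\alpha_k}\sum_l d_{k,l}z^l/l!$ is a linear combination of the monomials $M_{l,\alpha_k}$ with $l<m_k$, hence $T$-mean-periodic by Lemma~\ref{monomials}(ii), so the sum solves~\eqref{equ} by continuity of $T\star$.

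I expect the main obstacle to be the bookkeeping identifying $\rho(\mathcal{L}(T_{k,l}))=B^{k,l}$ together with the verification that $\mathcal{L}(T_{k,l})$ genuinely lies in $\mathcal{G}_\theta(\C)$ (so that $T_{k,l}$ is a well-defined element of $\mathcal{F}_\theta^\prime(\C)$): this is where the interpolating-variety hypothesis is silently doing work, since it is equivalent to $\Phi$ having a sufficiently "spread out" zero set that $\Phi(\xi)/(\xi-\alpha_k)^{m_k-l}$ stays in the class — formally, this is again Lemma~\ref{div} applied to $\Phi$ divided by the polynomial $(\xi-\alpha_k)^{m_k-l}\in\mathcal{G}_\theta(\C)$, so it is routine, but it is the one place where care is needed. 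A minor secondary point is to state precisely, following \cite{Ou2}, why under the interpolating hypothesis $\rho$ is surjective onto $\tilde{\mathcal B}_\theta(V)$ rather than merely onto $\mathcal{A}_\theta(V)$; this is essentially the definition (Definition~\ref{intvar}) combined with the fact that the divided-difference weights then add nothing, which one can cite from \cite[Theorem~1.11]{Ou2} or re-derive.
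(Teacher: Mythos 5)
Your proposal reproduces the paper's proof essentially verbatim: redefine $\alpha=\rho\circ\mathcal{L}$ (with $\Psi$ replaced by the identity, which is exactly what the paper does in this section by redefining $\mathcal{A}_{\theta,m}(V)$), show $\alpha$ is continuous and surjective using the interpolating hypothesis (Proposition~\ref{alphaint}), identify the dual with $\mathcal{D}_\theta(V)$ (Lemma~\ref{dualint}), invoke the analogue of Lemma~\ref{fonc}, and transpose. The convergence estimates, the use of $\delta_z$ to extract the series from $\langle d,\rho(g_z)\rangle$, and the converse via Lemma~\ref{monomials} all match the paper.

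One caveat, which you label a ``direct computation'' and which the paper likewise states without verification: the identity $\rho(\mathcal{L}(T_{k,l}))=B^{k,l}$. The off-block entries at $\alpha_j$, $j\ne k$, and the entries at $(\alpha_k,n)$ for $n\le l$ do check out exactly as you argue. But for $l<n<m_k$ the $n$-th Taylor coefficient of $\mathcal{L}(T_{k,l})$ at $\alpha_k$ equals $\dfrac{m_k!}{\Phi^{(m_k)}(\alpha_k)}\cdot\dfrac{\Phi^{(m_k+n-l)}(\alpha_k)}{(m_k+n-l)!}$, which does not vanish in general. So $\rho(\mathcal{L}(T_{k,l}))$ is $B^{k,l}$ plus a tail in the $k$-th block, and $\langle T_{k,l},f\rangle$ is $d_{k,l}$ plus a correction involving the $d_{k,l'}$ with $l'>l$. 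This affects only the explicit coefficient formula (the existence of the expansion and the estimate~\eqref{dkj} are untouched), it is vacuous when $m_k=1$, and it is repairable by an upper-triangular change of the $T_{k,l}$ within each block (working down from $l=m_k-1$). Since this imprecision is inherited from the paper rather than introduced by you, I flag it rather than count it against you, but a careful writeup should address it.
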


Note that $\mathcal{L}(T_{k,l})\in \mathcal{G}_{\theta}(\C)$ by
Proposition \ref{div}.
\begin{remark}
In the case where ${\theta}(x)=x$, this Theorem 5.2 is also a consequence of
\cite[Theorem 6.2.6.]{Be-Ga}.
\end{remark}

We will denote by ${\mathcal A}_{\theta,m}(V)$ the space of all
doubly indexed sequences of complex numbers $a=\{a_{k,l}\}_{k\in
\N, 0\le l<m_k}$  such that 
\begin{equation}\label{norm}
\Vert  a
\Vert_{\theta,m}:=\sup_{k\in \N}  \sum_{l=0}^{m_k-1}\vert
a_{k,l}\vert e^{-{\theta}(m \vert \alpha _k\vert)}<+\infty
\end{equation} and
$$ \mathcal{A}_{\theta}(V)=\cup_{p\in
N^*}\mathcal{A}_{\theta,p}(V)$$ endowed with the strict inductive
limit of Banach spaces.

We define the linear map $$\alpha= \rho \circ \mathcal{L} :
\mathcal{F}_{\theta}^{\prime}(\C)\rightarrow
\mathcal{A}_{\theta}(V).$$

\begin{proposition}\label{alphaint}
The map $\alpha$  is continuous and surjective.
\end{proposition}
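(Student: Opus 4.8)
The plan is to mirror the proof of Proposition \ref{alpha} from Section \ref{proof}, adapting each ingredient to the simpler target space $\mathcal{A}_{\theta}(V)$ attached to an interpolating variety. The map $\alpha = \rho\circ\mathcal{L}$ factors through the topological isomorphism $\mathcal{L}:\mathcal{F}_{\theta}^{\prime}(\C)\to\mathcal{G}_{\theta}(\C)$ of Proposition \ref{laplace}, so it suffices to prove that the restriction operator $\rho:\mathcal{G}_{\theta}(\C)\to\mathcal{A}_{\theta}(V)$, $g\mapsto\{g^{(l)}(\alpha_k)/l!\}_{k,0\le l<m_k}$, is continuous and surjective. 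Surjectivity is exactly the defining property of an interpolating variety (Definition \ref{intvar}): given $a=\{a_{k,l}\}$ with $\sup_k\sum_{l=0}^{m_k-1}|a_{k,l}|e^{-\theta(m|\alpha_k|)}<\infty$ for some $m>0$, there is by hypothesis a $g\in\mathcal{G}_{\theta}(\C)$ with $g^{(l)}(\alpha_k)/l! = a_{k,l}$, i.e. $\rho(g)=a$. Hence $\rho$, and therefore $\alpha$, is onto $\mathcal{A}_{\theta}(V)$.

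For continuity, I would argue space by space. Fix $p\in\N^*$ and take $g\in E_{\theta,p}(\C)$, so $|g(\xi)|\le \|g\|_{\theta,p}\,e^{\theta(p|\xi|)}$ for all $\xi$. By the Cauchy estimates applied on the circle of radius $1$ centered at $\alpha_k$,
\[
\left|\frac{g^{(l)}(\alpha_k)}{l!}\right| \le \max_{|\xi-\alpha_k|=1}|g(\xi)| \le \|g\|_{\theta,p}\,e^{\theta(p(|\alpha_k|+1))}.
\]
Using convexity of $\theta$ and $\theta(0)=0$ one gets $\theta(p(|\alpha_k|+1))\le \theta(2p|\alpha_k|)$ for $|\alpha_k|\ge 1$ (and the finitely many $\alpha_k$ in the unit disc contribute a bounded factor), so $\sum_{l=0}^{m_k-1}|g^{(l)}(\alpha_k)/l!|\,e^{-\theta(2p|\alpha_k|)} \le C\,m_k\,\|g\|_{\theta,p}$; since the multiplicities $m_k$ of an interpolating variety are uniformly bounded (this is part of the standard hypotheses, and in any case follows from the interpolation property applied to suitable test sequences), this shows $\rho$ maps $E_{\theta,p}(\C)$ continuously into $\mathcal{A}_{\theta,2p}(V)$. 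Passing to the inductive limits on both sides yields continuity of $\rho:\mathcal{G}_{\theta}(\C)\to\mathcal{A}_{\theta}(V)$, and composing with the isomorphism $\mathcal{L}$ gives continuity of $\alpha$. One can alternatively simply invoke \cite[Proposition 1.8]{Ou2}, exactly as was done in the proof of Proposition \ref{alpha}, since the continuity of the restriction map on $\mathcal{G}_\theta(\C)$ is established there in the general multiplicity-variety setting.

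The only genuine subtlety is bookkeeping of the $\theta$-exponent: the Cauchy estimate naturally produces $\theta(p(|\alpha_k|+1))$ rather than $\theta(p|\alpha_k|)$, so one must absorb the shift by enlarging the constant $p$, which is harmless in the inductive-limit topology. There is no real obstacle here — the proposition is essentially a restatement of the interpolation hypothesis together with the elementary continuity of evaluation of derivatives on spaces of entire functions with controlled growth — so I would keep the proof short, citing Proposition \ref{laplace}, Definition \ref{intvar}, and \cite[Proposition 1.8]{Ou2}.
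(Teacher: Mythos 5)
Your overall structure — factor $\alpha$ through the isomorphism $\mathcal{L}$ of Proposition \ref{laplace}, get surjectivity of $\rho$ from Definition \ref{intvar}, get continuity from a Cauchy-type estimate — is exactly the paper's approach. However, your continuity argument has a genuine gap: you use a Cauchy estimate on a circle of radius $1$, which gives the \emph{same} bound for every $l$, and therefore the sum $\sum_{l=0}^{m_k-1}\bigl|g^{(l)}(\alpha_k)/l!\bigr|$ picks up a factor of $m_k$. You then dispose of this factor by asserting that the multiplicities of an interpolating variety are uniformly bounded. This is false: the paper's own remark immediately after the proposition records only the weaker bound $m_k\le A e^{\theta(m|\alpha_k|)}$, and one can construct interpolating varieties (e.g.\ points $\alpha_k=2^k$ with multiplicity $m_k=k$ in $\mathrm{Exp}(\C)$) where $m_k\to\infty$. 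Your estimate therefore does not show $\rho(g)\in\mathcal{A}_{\theta,2p}(V)$ with the norm \eqref{norm}, which is a $\sup_k$ of a \emph{sum} over $l$.

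The paper avoids this entirely by applying Cauchy's estimates on a disc of radius $2$ rather than $1$. This produces the extra geometric factor $2^{-l}$, so that $\sum_{l\ge 0} 2^{-l}=2$ kills the dependence on $m_k$ without any hypothesis on the multiplicities. Your argument can be repaired either by switching to radius $\ge 2$ as the paper does, or by invoking the growth bound $m_k\le Ae^{\theta(m|\alpha_k|)}$ from the remark and absorbing the extra exponential into a further enlargement of the index $p$ (which is harmless in the inductive limit). Also note that your fallback citation of \cite[Proposition 1.8]{Ou2} is not quite on target here: that result concerns continuity of $\rho$ into the divided-difference space $\mathcal{A}_\theta(V)$ of Section \ref{proof}, whose norm is different from the norm \eqref{norm} used in Section \ref{intcase}, so the paper proves continuity for this new target from scratch.
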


\begin{proof}

It is sufficient to show that the map $\rho : {\mathcal
G}_{\theta}(\C)\rightarrow \mathcal{A}_{\theta}(V)$ is surjective
and continuous. The surjectivity follows from the fact that $V$ is
an interpolating variety.

In order to show the continuity, let $g\in  {\mathcal
G}_{\theta,p}(\C)$ and let $z\in \C$. By the Cauchy estimates
applied to the disc of center $z$ and radius $2$, for all $l\in
\N$, $$\left \vert \frac{g^{l}(z)}{l!}\right \vert \le
\frac{1}{2^l}\sup_{\vert \xi-z\vert\le 2}\vert g(\xi)\vert.$$ For
$\vert \xi-z\vert \le 2$, we have $$\vert g(\xi)\vert \le \Vert
g\Vert_{\theta,p} e^{\theta(p\vert \xi\vert)}\le \Vert
g\Vert_{\theta,p} e^{\theta(2p+p\vert z\vert)}\le \Vert
g\Vert_{\theta,p}e^{1/2\theta(4p)}e^{1/2\theta(2p\vert z\vert)}$$
by convexity of $\theta$. Thus, $$\sum_{l=0}^\infty \left\vert
\frac{g^{l}(z)}{l!}\right\vert \le 2 \Vert
g\Vert_{\theta,p}e^{1/2\theta(2p)}e^{\theta(2p\vert z\vert)}.$$ In
particular, we deduce that $\rho(g)\in \mathcal{A}_{\theta,2p}(V)$
and that $$\Vert \rho(g)\Vert_{\theta,2p}\le 2 \Vert
g\Vert_{\theta,p}e^{1/2\theta(2p)}.$$ The continuity of $\rho$
follows from the last inequality. 
\end{proof}
\begin{remark}
By a standard result about interpolating varieties (see \cite[chapter 2]{Be-Ga}),  the multiplicities verify, for  certain constants $A,m>0$,  
$$m_k\le A e^{\theta(m\vert \alpha_k\vert)},\ \ \forall k\in \N.$$ 
 Consequently, we may replace  the norm given by  (\ref{norm}) by the following
$$\Vert  a
\Vert_{\theta,m}:=\sup_{k\in \N}  \sup_{0\le l<m_k-1}\vert
a_{k,l}\vert e^{-{\theta}(m \vert \alpha _k\vert)}$$
in the definition of ${\mathcal A}_{\theta}(V)$.
\end{remark}

\begin{lemma}\label {dualint}
The space ${\mathcal A}_{\theta}^{\prime}(V)$ is topologically
isomorphic to the space $\mathcal{D}_{\theta}(V)=\cap_{p\in
\N^*}\mathcal{D}_{\theta,p}(V)$ endowed with the projective limit
topology, where, for all $p$, $\mathcal{D}_{\theta,p}(V)$ is the
Banach space of the sequences $d=\{d_{k,l}\}_{k, 0\le l<m_k}$ such
that
\begin{equation}\label{dkjp}
\Vert d\Vert'_{\theta,p} :=\sum_{k\ge 0} e^{{\theta}(p\vert
\alpha_k\vert)}\left(\sum_{l=0}^{m_k-1} \vert d_{k,l}\vert\right) <+\infty.
\end{equation}
Moreover, $\mathcal{D}_{\theta}(V)$ is a Fr\'echet-Schwartz space.
\end{lemma}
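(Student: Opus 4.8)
The plan is to follow the proof of Lemma~\ref{dualgen} almost verbatim, the space $\mathcal{A}_\theta(V)$ here playing the role of $\mathcal{B}_\theta(V)$ there, with the weights $(\vert\alpha_k\vert+1)^{m_0+\cdots+m_{k-1}+l}$ replaced by $1$. First I would dispose of the one genuine difference between the two situations: the norm \eqref{norm} defining $\mathcal{A}_{\theta,m}(V)$ involves a \emph{sum} over $l$ rather than a supremum. By the Remark following Proposition~\ref{alphaint} (an interpolating variety satisfies $m_k\le Ae^{\theta(m\vert\alpha_k\vert)}$), together with the superadditivity $\theta(a)+\theta(b)\le\theta(a+b)$ of a convex function vanishing at $0$, one checks that $\sup_k\sum_{l<m_k}\vert a_{k,l}\vert e^{-\theta(m\vert\alpha_k\vert)}$ and $\sup_k\sup_{l<m_k}\vert a_{k,l}\vert e^{-\theta(m\vert\alpha_k\vert)}$ give rise to cofinal scales of Banach spaces, hence to the same inductive-limit space $\mathcal{A}_\theta(V)$; I may therefore use whichever of the two is convenient, and the supremum version puts me in exactly the setting of Lemma~\ref{dualgen}.

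Next I would introduce $\beta:\mathcal{D}_\theta(V)\to\mathcal{A}_\theta^\prime(V)$ by $\langle\beta(d),a\rangle=\sum_k\sum_{l<m_k}d_{k,l}a_{k,l}$. The elementary estimate $\sum_{l<m_k}\vert d_{k,l}a_{k,l}\vert\le e^{\theta(p\vert\alpha_k\vert)}\Vert a\Vert_{\theta,p}\sum_{l<m_k}\vert d_{k,l}\vert$ for $a\in\mathcal{A}_{\theta,p}(V)$, summed over $k$ and combined with \eqref{dkjp}, shows that the series converges absolutely and that $\vert\langle\beta(d),a\rangle\vert\le\Vert d\Vert'_{\theta,p}\Vert a\Vert_{\theta,p}$, so $\beta$ is well defined and continuous. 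Injectivity follows by testing against the canonical sequences $B^{k,l}=\{\delta_{kj}\delta_{ln}\}_{j,0\le n<m_j}$, which belong to every $\mathcal{A}_{\theta,p}(V)$ and satisfy $d_{k,l}=\langle\beta(d),B^{k,l}\rangle$. For surjectivity, given $\nu\in\mathcal{A}_\theta^\prime(V)$ I put $d_{k,l}=\langle\nu,B^{k,l}\rangle$; fixing $p$ and using the supremum norm, I form $\tilde a_{k,l}=e^{\theta(p\vert\alpha_k\vert)}\overline{d_{k,l}}/\vert d_{k,l}\vert$ when $d_{k,l}\neq0$ and $\tilde a_{k,l}=0$ otherwise, observe that each finite truncation $\tilde a^K=\sum_{k\le K}\sum_{l<m_k}\tilde a_{k,l}B^{k,l}$ satisfies $\Vert\tilde a^K\Vert_{\theta,p}\le1$, and deduce from $\langle\nu,\tilde a^K\rangle=\sum_{k\le K}e^{\theta(p\vert\alpha_k\vert)}\sum_{l<m_k}\vert d_{k,l}\vert\le\Vert\nu\Vert'_{\theta,p}$, on letting $K\to\infty$, that $d\in\mathcal{D}_{\theta,p}(V)$ with $\Vert d\Vert'_{\theta,p}\le\Vert\nu\Vert'_{\theta,p}$; this last bound, valid for all $p$, also gives the continuity of $\beta^{-1}$. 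As in Lemma~\ref{dualgen} I would then check, via the convexity inequality \eqref{convexity}, that $\tilde a^K\to a$ in $\mathcal{A}_{\theta,q}(V)$ for $q>p$, so that $\langle\nu,a\rangle=\langle\beta(d),a\rangle$ and thus $\beta(d)=\nu$.

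For the Fréchet--Schwartz assertion I would invoke the same criterion as for $\mathcal{C}_\theta(V)$ (\cite[Proposition~1.4.8]{Be-Ga}): it suffices that each canonical injection $i_p:\mathcal{D}_{\theta,p+1}(V)\to\mathcal{D}_{\theta,p}(V)$ be compact. Given $\{d^n\}$ with $\Vert d^n\Vert'_{\theta,p+1}\le1$, a diagonal extraction yields a subsequence converging coordinatewise to some $d$ with $\Vert d\Vert'_{\theta,p+1}\le1$; splitting $\Vert d^n-d\Vert'_{\theta,p}$ into the finitely many terms $k<K$ (handled by coordinatewise convergence) and the tail $k\ge K$ (handled because $e^{\theta(p\vert\alpha_k\vert)-\theta((p+1)\vert\alpha_k\vert)}\to0$ by \eqref{convexity}) gives $d^n\to d$ in $\mathcal{D}_{\theta,p}(V)$. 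I do not anticipate a real obstacle: the entire argument parallels Lemma~\ref{dualgen}, and the only non-transcriptional point is the equivalence between the sum-over-$l$ and sup-over-$l$ norms on $\mathcal{A}_\theta(V)$, which is settled once and for all by the Remark following Proposition~\ref{alphaint}; what remains is careful bookkeeping of the parameter $p$ through the estimates.
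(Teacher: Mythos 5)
Your proposal is correct and follows exactly what the paper intends: the paper's own proof of Lemma~\ref{dualint} is the single sentence ``In view of the preceding remark, the proof is similar to the one of Lemma~\ref{dualgen}'', and you supply precisely the missing details. In particular, you correctly identify that the only nontrivial point is the cofinality of the sum-over-$l$ and sup-over-$l$ scales of norms on $\mathcal{A}_\theta(V)$, which you justify exactly as the paper does via the multiplicity bound $m_k\le Ae^{\theta(m\vert\alpha_k\vert)}$ and the superadditivity of $\theta$; the rest is the expected transcription of Lemma~\ref{dualgen} with the weights $(\vert\alpha_k\vert+1)^{m_0+\cdots+m_{k-1}+l}$ set to $1$.
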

In view of the preceding remark, the proof is similar to  the one of Lemma \ref{dualgen}.
We are now ready to prove Theorem \ref{int}.
From Lemma \ref{fonc} (which is still valid with the new definition of $\rho$) and Lemma \ref {dualint}, any $T$-mean-periodic function $f$ is the
image by $\alpha^t$ of a unique $d\in
\mathcal{A}_{\theta}^{\prime}(V)$. We have, for all $z\in \C$,
$$f(z)=<\delta_z,f>=<\delta_z,
\alpha^t(d)>=<d,\alpha(\delta_z)>=<d,\rho(g_z)>=\sum_{k\ge 0}
e^{z\alpha_k}\sum_{l=0}^{m_k-1}\frac{z^l}{l!}d_{k,l}.$$ To compute
the coefficients $d_{k,l}$ :
$$<T_{k,l},f>=<T_{k,l},\alpha^t(d)>=<d,
\alpha(T_{k,l})>=d_{k,l}.$$ The last equality follows from the
observation that $$\alpha(T_{k,l})=B^{k,l}.$$

The rest of the proof is similar to the one of Theorem \ref{gen}.

Let us recall some results about interpolating varieties that
enables one to determine whether $V$ is interpolating or not. We first give a known analytic characterization (see \cite{Be-Ta} or \cite{Be-Ga}). The spaces of entire functions considered are slightly
different, but is clear how to adapt these results to our spaces.
\begin{theorem}
 $V$ is an interpolating variety for
$\mathcal{G}_{\theta}(\C)$ if and only if, there are constants
$\varepsilon>0$ and $m>0$ such that, for all $k$,
 $$\left \vert \frac{\Phi^{m_k}(z)}{m_k!}\right \vert\ge
\varepsilon e^{-\theta(m\vert \alpha_k\vert)}.$$
\end{theorem}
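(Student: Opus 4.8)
The plan is to transpose to $\mathcal{G}_\theta(\C)$ the proof of the analogous characterisation in \cite{Be-Ta} (see also \cite[Chapter~2]{Be-Ga}); the only new feature is the weight, and since $\theta$ is convex and increasing while $z\mapsto|z|$ is subharmonic, the weights $z\mapsto\theta(p|z|)$ are subharmonic, so that H\"ormander's $L^{2}$-estimates for $\bar\partial$ are available. Throughout write $\Phi(z)=(z-\alpha_k)^{m_k}\phi_k(z)$ near $\alpha_k$, so that $\Phi^{(m_k)}(\alpha_k)/m_k!=\phi_k(\alpha_k)\neq 0$ and the asserted condition reads: there are $\varepsilon,m>0$ with $|\phi_k(\alpha_k)|\ge\varepsilon\,e^{-\theta(m|\alpha_k|)}$ for all $k$.

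\emph{Necessity.} Suppose $V$ is interpolating, i.e.\ (Definition \ref{intvar}) the continuous map $\rho:\mathcal{G}_\theta(\C)\to\mathcal{A}_\theta(V)$ of Proposition \ref{alphaint} is onto. By the open mapping theorem for these spaces (cf.\ \cite[Chapter~2]{Be-Ga}) the interpolation can be carried out with uniform bounds: there are $p\in\N^*$ and $C>0$ \emph{independent of }$k$ such that, for $a^{(k)}$ the sequence with $(k,m_k-1)$-entry $1$ and all other entries $0$ (so $\Vert a^{(k)}\Vert_{\theta,1}\le 1$), there is $g_k\in\mathcal{G}_{\theta,p}(\C)$ with $\Vert g_k\Vert_{\theta,p}\le C$ and $\rho(g_k)=a^{(k)}$. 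Thus $g_k$ vanishes to order $\ge m_j$ at $\alpha_j$ for $j\neq k$ and to order exactly $m_k-1$ at $\alpha_k$ with $g_k^{(m_k-1)}(\alpha_k)/(m_k-1)!=1$; writing $g_k=(z-\alpha_k)^{m_k-1}G_k$ ($G_k$ entire, $G_k(\alpha_k)=1$), the function $w_k:=(z-\alpha_k)g_k/\Phi=G_k/\phi_k$ is entire and $w_k(\alpha_k)=1/\phi_k(\alpha_k)$. Now $w_k\Phi=(z-\alpha_k)g_k\in\mathcal{G}_\theta(\C)$, with $\Vert(z-\alpha_k)g_k\Vert_{\theta,q}\le C'(1+|\alpha_k|)$ for a fixed $q$ (the linear factor being absorbed into $e^{-\theta(q|z|)}$ by convexity of $\theta$ and $r=o(\theta(r))$); since the constants appearing in the proof of Lemma \ref{div} which come from the minimum modulus theorem depend only on the divisor — here the fixed function $\Phi$ — that lemma furnishes a fixed exponent $P$ with $\Vert w_k\Vert_{\theta,P}\le C''(1+|\alpha_k|)$. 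Evaluating at $\alpha_k$, $1/|\phi_k(\alpha_k)|=|w_k(\alpha_k)|\le C''(1+|\alpha_k|)\,e^{\theta(P|\alpha_k|)}$, and absorbing $\log(1+|\alpha_k|)$ into the exponent gives $|\phi_k(\alpha_k)|\ge\varepsilon\,e^{-\theta(m|\alpha_k|)}$ for suitable $\varepsilon,m>0$.

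\emph{Sufficiency.} Assume the estimate. First, the minimum modulus theorem \cite[Lemma~2.2.11]{Be-Ga} applied to $\phi_k$ on a disc of fixed radius about $\alpha_k$ — whose supremum norm is $\le C\,e^{\theta(p'|\alpha_k|)}$ since $\Phi\in\mathcal{G}_\theta(\C)$, and with $\log\bigl(\max|\phi_k|/|\phi_k(\alpha_k)|\bigr)\le\theta(p''|\alpha_k|)+O(1)$ by hypothesis — produces, for each $k$, a closed annulus $A_k$ of fixed width, at distance between $2$ and $4$ from $\alpha_k$, on which $|\Phi|\ge c\,e^{-\theta(m'|\alpha_k|)}$, with $c,m'$ independent of $k$ (it suffices that the exceptional discs be small relative to the annulus width). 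One then solves the interpolation semilocally, as in \cite{Be-Ta}: group the $\alpha_k$ into well-separated clusters, represent the prescribed jets on a neighbourhood of each cluster by a polynomial $g_C$, choose cut-offs $\chi_C$ equal to $1$ near the cluster with $\bar\partial\chi_C$ supported in $\bigcup_{k\in C}A_k$, and put $\tilde g=\sum_C\chi_C g_C$; then $\bar\partial\tilde g/\Phi$ is smooth with growth $\lesssim e^{\theta(p|z|)}$, H\"ormander's theorem gives a smooth $v$ with $\bar\partial v=\bar\partial\tilde g/\Phi$ of $\theta$-exponential growth, and $g:=\tilde g-\Phi v\in\mathcal{G}_\theta(\C)$ is entire with $\rho(g)=a$, because $\Phi v$ vanishes to order $m_k$ at each $\alpha_k$ while $\tilde g$ carries the prescribed jet there. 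The converse part of the equivalence and the control of the constants via convexity of $\theta$ are routine. The delicate point is this sufficiency direction — turning the pointwise lower bounds on $\phi_k(\alpha_k)$ into uniform lower bounds for $\Phi$ on genuine annuli, and arranging the cut-offs around clusters so that overlapping supports do not disturb the interpolated jets — but it proceeds exactly as in \cite{Be-Ta}; necessity, by contrast, is short once Lemma \ref{div} is used quantitatively.
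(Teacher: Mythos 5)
The paper does not actually prove this theorem: it presents it as a known analytic characterization, pointing to \cite{Be-Ta} and \cite{Be-Ga} with only the remark that ``the spaces of entire functions considered are slightly different, but it is clear how to adapt these results.'' Your proposal is therefore more detailed than the paper itself, and it is a faithful reconstruction of what that citation is meant to cover. (You also correctly read the statement: the displayed quantity should be $\Phi^{(m_k)}(\alpha_k)/m_k!=\phi_k(\alpha_k)$, not $\Phi^{m_k}(z)$.) The necessity half you give is essentially complete and correct: the open-mapping principle for the surjection $\rho$ between these inductive-limit spaces yields uniformly bounded lifts $g_k$, the function $w_k=(z-\alpha_k)g_k/\Phi$ is entire because $g_k$ carries the prescribed zeros, and your quantitative reading of Lemma~\ref{div} is legitimate since the minimum-modulus constants there depend only on the fixed divisor $\Phi$; the stray factor $(1+|\alpha_k|)$ is absorbed because $r=o(\theta(r))$. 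The sufficiency half is, as you acknowledge, only a sketch: converting the pointwise lower bound on $\phi_k(\alpha_k)$ into a uniform lower bound on annuli disjoint from the other zeros, organizing the $\alpha_k$ into well-separated clusters, building cut-offs whose $\bar\partial$-supports avoid the zeros, and then invoking H\"ormander with the subharmonic weight $\theta(p|z|)$ — these are exactly the nontrivial steps in \cite{Be-Ta}, and you defer them there just as the paper does. So relative to the paper's level of rigour there is no gap, but a fully self-contained proof would still require carrying out the \cite{Be-Ta} clustering and $\bar\partial$ argument in the general $\theta$-weighted setting; that is where the real work lies, and your write-up correctly flags it rather than disguising it.
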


We also give a known  geometric characterization  (see \cite[Corollary 4.8]{Be-Li}  or \cite[Theorem 1.8]{Ou1})
in terms of the distribution of the points
$\{(\alpha_k,m_k)\}_k$.

Define the counting function and the integrated counting function
:

\begin{definition}\label{count}
For $z\in \C$ and $r>0$,
\[
n(z,r)=\sum\limits_{|z-\alpha_k|\le r} m_k,
\]

\[
N(z,r)=\int_0^r\frac{n(z,t)-n(z,0)}t\, dt + n(z,0)\ln
r=\sum_{0<\vert z- \alpha_k\vert\le r}m_k \ln {r\over \vert
z-\alpha_k\vert}+n(z,0)\ln r .
\]
\end{definition}

\begin{theorem}

$V$ is an interpolating variety for $\mathcal{G}_{\theta}(\C)$ if
and only if conditions

\begin{equation}\label{N(0,R)}
\exists A>0,\  \exists m>0\ \ \forall R>0,\ \ \ N(0,R) \le A+
{\theta}(m R )
\end{equation}

and

\begin{equation}\label{N(z,z)}
\exists A>0,\  \exists m>0\ \ \forall k\in \N,\ \ \ N(\alpha_k,\vert
\alpha_k\vert) \le A+{\theta}(m \vert \alpha_k\vert)
\end{equation}
hold.

\end{theorem}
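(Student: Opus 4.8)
\emph{Overall plan.} I would use the analytic characterization stated just above --- $V$ is interpolating iff $|\Phi^{(m_k)}(\alpha_k)/m_k!|\ge\varepsilon\,e^{-\theta(m|\alpha_k|)}$ for some $\varepsilon,m>0$ --- as a bridge, and pass between it and \eqref{N(0,R)}, \eqref{N(z,z)} by means of Jensen's and the Poisson--Jensen formulas. After a harmless translation I may assume $\Phi(0)\ne 0$ (the geometric conditions are preserved up to the constants, since $\theta(m|\alpha_k-\tau|)\le\theta(2m|\alpha_k|)$ once $|\alpha_k|\ge|\tau|$), and I will use repeatedly that a convex $\theta$ with $\theta(0)=0$ is superadditive ($\theta(s)+\theta(t)\le\theta(s+t)$) and satisfies $j\,\theta(s)\le\theta(js)$, so that any finite sum of terms $A_i+\theta(m_i|\alpha_k|)$ is again of that shape.

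\emph{Necessity.} First I would apply Jensen's formula to $\Phi$ on $|z|\le R$: $N(0,R)=\frac1{2\pi}\int_0^{2\pi}\log|\Phi(Re^{i\varphi})|\,d\varphi-\log|\Phi(0)|\le A+\theta(pR)$ because $\Phi\in\mathcal{G}_\theta(\C)$; this gives \eqref{N(0,R)} with no hypothesis on $V$. Next I would apply Jensen's formula to $\Phi$ centered at $\alpha_k$ with radius $|\alpha_k|$: $N(\alpha_k,|\alpha_k|)=\frac1{2\pi}\int_0^{2\pi}\log|\Phi(\alpha_k+|\alpha_k|e^{i\varphi})|\,d\varphi-\log|\Phi^{(m_k)}(\alpha_k)/m_k!|$; the integral is $\le A+\theta(2p|\alpha_k|)$ since the circle lies in $|z|\le 2|\alpha_k|$, while if $V$ is interpolating the last term is $\le\log(1/\varepsilon)+\theta(m|\alpha_k|)$ by the analytic characterization, and superadditivity of $\theta$ yields \eqref{N(z,z)}.

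\emph{Sufficiency.} Assuming \eqref{N(0,R)} and \eqref{N(z,z)}, I would, thanks to the analytic characterization, only need to prove $|\Phi^{(m_k)}(\alpha_k)/m_k!|\ge\varepsilon\,e^{-\theta(m|\alpha_k|)}$. I would set $\Psi_k(z)=\Phi(z)/(z-\alpha_k)^{m_k}$, so $\Psi_k(\alpha_k)=\Phi^{(m_k)}(\alpha_k)/m_k!\ne0$, and apply the Poisson--Jensen formula to $\Psi_k$ on $|z|\le R$ with $R=3|\alpha_k|$, evaluated at $z=\alpha_k$: $\log|\Psi_k(\alpha_k)|=J_k-B_k$, where $J_k$ is the Poisson integral of $\log|\Psi_k|$ over $|z|=R$ at the point $\alpha_k$ and $B_k=\sum_{j\ne k,\ |\alpha_j|<R}m_j\log\bigl|(R^2-\bar\alpha_j\alpha_k)/(R(\alpha_k-\alpha_j))\bigr|$. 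Since $|\alpha_k|=R/3$, the Poisson kernel at $\alpha_k$ stays in $[1/2,2]$, so $J_k$ is at least $-2$ times the $\log^-$-mean of $\Psi_k$ over $|z|=R$; I would estimate that mean by writing it as (mean of $\log^+|\Psi_k|$) minus (mean of $\log|\Psi_k|$), bounding the former by the growth of $\Phi$ and the latter below, via Jensen at the origin, in terms of $\log|\Phi(0)|-m_k\log|\alpha_k|$, where $m_k\log|\alpha_k|\le N(\alpha_k,|\alpha_k|)\le A+\theta(m|\alpha_k|)$ by \eqref{N(z,z)}; this gives $J_k\ge-A'-\theta(m'|\alpha_k|)$. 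For $B_k$, every factor has modulus $\ge1$ (as $|\alpha_j|,|\alpha_k|<R$) and $\le 2R^2/(R|\alpha_k-\alpha_j|)$, so $0\le B_k\le\sum_{0<|\alpha_k-\alpha_j|\le6|\alpha_k|}m_j\log(6|\alpha_k|/|\alpha_k-\alpha_j|)\le N(\alpha_k,6|\alpha_k|)$; and $N(\alpha_k,6|\alpha_k|)-N(\alpha_k,|\alpha_k|)\le(2\log6)\,n(\alpha_k,6|\alpha_k|)$ with $n(\alpha_k,6|\alpha_k|)\le n(0,7|\alpha_k|)\le N(0,7e|\alpha_k|)\le A+\theta(7em|\alpha_k|)$ from \eqref{N(0,R)}, so \eqref{N(z,z)} gives $B_k\le A''+\theta(m''|\alpha_k|)$. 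Adding these estimates and absorbing error terms would give $\log|\Psi_k(\alpha_k)|\ge-A-\theta(m|\alpha_k|)$, as wanted.

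\emph{Main obstacle.} The hard part will be the lower bound for $J_k$: $\log|\Phi|$ is only subharmonic and can be very negative near the remaining zeros, so its circular mean over an arbitrary circle is not controlled from below; the way around this is to work on a disc of radius comparable to $|\alpha_k|$, where the Poisson kernel at $\alpha_k$ is bounded away from $0$, and to estimate the $\log^-$-mass by a second use of Jensen's formula --- and it is exactly there that \eqref{N(0,R)} and \eqref{N(z,z)} enter, to absorb the terms $m_k\log|\alpha_k|$ and $N(\alpha_k,6|\alpha_k|)$ that appear. (Were $V$ given abstractly rather than as $\Phi^{-1}(0)$, I would additionally need \eqref{N(0,R)} to construct a defining function $\Phi\in\mathcal{G}_\theta(\C)$ by a Weierstrass product with genus adapted to $\theta$ --- routine but lengthy, since $\theta$ may grow faster than every power of $|z|$.)
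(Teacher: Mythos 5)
The paper does not prove this theorem; it is stated as a known result and referred to Berenstein--Li \cite[Corollary~4.8]{Be-Li} and Ounaies \cite[Theorem~1.8]{Ou1}. So there is no ``paper's own proof'' to compare against, and I will assess your argument on its merits.

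Your route --- passing through the analytic criterion $\bigl|\Phi^{(m_k)}(\alpha_k)/m_k!\bigr|\ge\varepsilon\,e^{-\theta(m|\alpha_k|)}$ and translating between it and the two density conditions by Jensen and Poisson--Jensen --- is the standard one in the literature (it is essentially the Berenstein--Taylor argument for $\theta(x)=x$, adapted to a general Young weight), and as far as I can tell the computations go through. The necessity direction is clean: \eqref{N(0,R)} is Jensen at $0$ together with $\Phi\in\mathcal{G}_\theta(\C)$ (and indeed holds with no interpolation hypothesis, exactly as the paper remarks after the theorem), while \eqref{N(z,z)} is Jensen at $\alpha_k$ (with the multiple zero at the center removed, so that the free term is $\log|\Phi^{(m_k)}(\alpha_k)/m_k!|$) combined with the analytic lower bound. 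The sufficiency direction is correctly organized around the main obstacle you identify: a lower bound on the Poisson integral $J_k$ cannot come for free from subharmonicity, and your way around it --- take $R=3|\alpha_k|$ so that the Poisson kernel at $\alpha_k$ is pinched in $[1/2,2]$, then control the $\log^-$-mean by comparing the $\log^+$-mean (growth of $\Phi$) with the unsigned mean (Jensen at $0$, which produces the term $-m_k\log|\alpha_k|$ that you absorb by $m_k\log|\alpha_k|\le N(\alpha_k,|\alpha_k|)$ and \eqref{N(z,z)}) --- is exactly right. The Blaschke sum $B_k$ is correctly bounded: each factor has modulus in $[1,\,2R/|\alpha_k-\alpha_j|]$, all contributing $\alpha_j$ satisfy $|\alpha_k-\alpha_j|<4|\alpha_k|<6|\alpha_k|$, and your chain $B_k\le N(\alpha_k,6|\alpha_k|)\le N(\alpha_k,|\alpha_k|)+(\log 6)\,n(\alpha_k,6|\alpha_k|)$ with $n(\alpha_k,6|\alpha_k|)\le n(0,7|\alpha_k|)\le N(0,7e|\alpha_k|)$ is sound (the factor $2\log 6$ you wrote is more than enough; $\log 6$ already works).

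A few small points you should make explicit if you were writing this out in full: (a) the finitely many $k$ with $|\alpha_k|$ small need to be swept into the additive constant, as you implicitly do; (b) Poisson--Jensen requires no zero of $\Psi_k$ on $|z|=R$, so $R$ must be perturbed slightly within $[3|\alpha_k|,4|\alpha_k|]$, which only changes constants; (c) the translation that makes $\Phi(0)\ne0$ replaces each $\alpha_k$ by $\alpha_k-\tau$, and one must check that both $N(\alpha_k,|\alpha_k|)$-type quantities and the target estimate $\theta(m|\alpha_k|)$ transform compatibly --- you indicate this, and it is indeed routine once $|\alpha_k|\ge 2|\tau|$. Finally, you are right that the theorem as stated in the paper is for an abstract multiplicity variety $V$, whereas your argument uses $V=\Phi^{-1}(0)$; within this paper that is always the case, but a self-contained proof of the cited theorem would additionally need the Weierstrass/Hadamard construction of a defining function from \eqref{N(0,R)}, which you correctly flag.
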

 Actually, in this paper, since $V=\Phi^{-1}(0)$ and $\Phi \in\mathcal{G}_{\theta}(\C)$, condition \eqref{N(0,R)} is necessarily verified (see, for example, \cite[Theorem 1.13]{Ou2}). Thus, $V$ is an interpolating variety if and only if condition \eqref{N(z,z)} holds.

 \begin{remark}
We can obtain Theorem \ref{int}  as a corollary of Theorem
\ref{gen}, using the density condition \eqref{N(z,z)}. This second
proof is rather technical, we will skip it here. Let us just give
the correspondence between the coefficients $c_{k,l}$ and
$d_{k,l}$ :

\begin{equation}
\begin{split}
d_{k,l}= & \sum_{i=l}^{m_k-1}c_{k,i}
\,\partial_k^{i-l}\left(\prod_{0\le n\le
k-1}(\alpha_k-\alpha_n)^{-m_n}\right)\\ & +\sum_{j=k+1}^\infty
\sum_{i=0}^{m_j-1}c_{j,i}\,\partial_k^{m_k-1-l}\left(\prod_{0\le
n\le j-1,
n\not=k}(\alpha_k-\alpha_n)^{-m_n}(\alpha_k-\alpha_j)^{-(i+1)}\right),
\end{split}
\end{equation}
the convergence of the second sum being a consequence of
conditions  \eqref{N(z,z)} and  \eqref{N(0,R)}.

In the case where all $m_k=1$, we have $$d_k=\sum_{j=k}^\infty c_j
\prod_{0\le n\le j,n\not=k} (\alpha_k-\alpha_n)^{-1}.$$
\end{remark}

\end{section}

\bibliographystyle{plain}
%\bibliography{Myriambib}
%\input{Ouerdiane-Ounaies.bbl}
\def\cprime{$'$}

\end{document}